\documentclass[11pt]{amsart}
\usepackage{mathrsfs}
\usepackage{amsfonts}
\usepackage{graphicx,epsfig,amsmath,amsfonts}
\usepackage{color}
\usepackage{latexsym,amssymb}
\thispagestyle{empty}
\usepackage{lineno,version}
\usepackage{multirow}
\usepackage{bm}
\usepackage{float}
\usepackage{subfigure}
\setlength{\textwidth}{38pc} \setlength{\textheight}{53pc}
\setlength{\oddsidemargin}{-.2cm}
\setlength{\evensidemargin}{-.2cm}

\catcode`\@=11
\theoremstyle{plain}
\@addtoreset{equation}{section}   

\@addtoreset{figure}{section}
\renewcommand\thefigure{\thesection.\@arabic\c@figure}

\newtheorem{theorem}{\indent Theorem}[section]
\newtheorem{lemma}{\indent Lemma}[section]

\newtheorem{remark}{\indent Remark}[section]

\newtheorem{example}{\indent Example}[section]
\newcommand{\ba}{\begin{array}}\newcommand{\ea}{\end{array}}
\newcommand{\be}{\begin{eqnarray}}\newcommand{\ee}{\end{eqnarray}}
\newcommand{\beq}{\begin{equation}}\newcommand{\eeq}{\end{equation}}
\newcommand{\bex}{\begin{eqnarray*}}
\newcommand{\eex}{\end{eqnarray*}}

\def\L{\mathcal L}

\font\tenbi=cmmib10   at 11 pt
\font\sevenbi=cmmib10 at 9pt
\font\fivebi=cmmib7 at 6pt
\newfam\bifam
\textfont\bifam=\tenbi \scriptfont\bifam=\sevenbi
\scriptscriptfont\bifam=\fivebi

\font\tendb=msbm10 at 12 pt
\font\sevendb=msbm7
\newfam\dbfam
\textfont\dbfam=\tendb \scriptfont\dbfam=\sevendb





\def\Dt{\Delta t}

\begin{document}
\title[Stochastic PDEs with random diffusion coefficients and Multiplicative noise]
{An efficient numerical approach for stochastic evolution PDEs driven by random diffusion coefficients and multiplicative noise$^*$}
\author[X. Qi, M. Azaiez, C. Huang, \& C.J. Xu]
{Xiao Qi$^{1}$
\quad
Mejdi Azaiez$^{1,2}$
\quad
Can Huang$^{1}$
\quad
Chuanju Xu$^{1,3}$}
\thanks{\hskip -12pt
${}^*$This research is partially supported by
NSFC grant 11971408, NSFC/ANR joint program ANR-16-CE40-0026-01, and the
French State in the frame of the ``Investments for the future" programme Idex Bordeaux
ANR-10-IDEX-03-02.\\
${}^{1}$School of Mathematical Sciences and
Fujian Provincial Key Laboratory of Mathematical Modeling and High Performance
Scientific Computing, Xiamen
University, 361005 Xiamen, China.\\
${}^{2}$Bordeaux INP, Laboratoire I2M UMR 5295, 33607 Pessac, France.\\
${}^{3}$Corresponding author. Email: cjxu@xmu.edu.cn}

\keywords {SEEs; Random coefficient; $Q$-Wiener Multiplicative noise; Strong convergence}
\subjclass[2010]{60H15, 60H35, 65C50}


\maketitle

\begin{abstract}
In this paper, we investigate the stochastic evolution equations (SEEs) driven by $\log$-Whittle-Mat$\acute{{\mathrm{e}}}$rn (W-M) random diffusion coefficient field and $Q$-Wiener multiplicative force noise. First, the well-posedness of the underlying equations is established 
by proving the existence, uniqueness, and stability of the mild solution. 
A sampling approach called approximation circulant embedding with padding is proposed to sample the random coefficient field. Then a spatio-temporal discretization method based on semi-implicit Euler-Maruyama scheme and finite element method
is constructed and analyzed. An estimate for the strong convergence rate is derived. 
Numerical experiments are finally reported to confirm the theoretical result.
\end{abstract}

\section{Introduction}
\label{Intro}
\numberwithin{equation}{section}

Stochastic partial differential equations (SPDEs) appears in many fields of science and engineering, and have been subject of many theoretical and numerical investigations. It is commonly believed that incorporating noise and/or uncertainty into models is closer to reality in mathematical modeling, due to the existence of uncertainty stemming from various sources such as thermal fluctuation, impurities of materials and so on. 
As an active area of research, numerical study of stochastic evolution equations (SEEs) has attracted increasing attention in the past decades; see, e.g., monographs \cite{KLOEDEN2013,LORD2014,MILSTEIN2013,kruse2014strong,hutzenthaler2015numerical,ZhANG2017} and references  therein. 
Although much progress has been made, it is still far from being satisfactory due to the 
numerical approximations to SEEs encounter all the difficulties that may arise in solving deterministic differential equations on one hand,  
and caused by the infinite dimensional nature of the driving noise processes on the other hand.  The present work focus on the SEEs perturbed by a smooth random diffusion coefficient field as well as multiplicative force noise, and aims to propose and analyze an efficient numerical method for this equation.

When considering the numerical approaches for SEEs with various noises, two categories of  convergence errors may be involved, namely weak error and strong error. The former is related to the approximation of the probability law of the solution. Concerning weak convergence error of numerical methods for SEEs, we refer to, for instance,   \cite{printems2001discretization,hausenblas2003weak,de2006weak,debussche2009weak,geissert2009rate,debussche2011weak,kovacs2012weak,kovacs2013weak,lindner2013weak,wang2013weak,brehier2014approximation,andersson2016weak,cui2019strong,cai2021weak} and references therein for a list of literature in this direction. Unlike weak convergence error, the strong convergence error measures the deviation from the trajectory of an exact solution. It has been extensively investigated in various types of SPDEs, see, e.g., \cite{ALLEN1998,davie2001convergence,DU2002,yan2005galerkin,walsh2005finite,JENTZEN2008,gyongy2009rate,KLOEDEN2011,CAO2017,WANGXJ2017,WangXJ2018,Kovcs2018,hutzenthaler2011strong,KRUSE2014,sauer2015lattice,gyongy2016convergence,Feng2017,beccari2019strong,jentzen2020strong,liu2020strong,liu2021strong} and references therein. We mention here some works on strong convergence of the numerical schemes for linear SEEs with additive or multiplicative noise. For example, Allen et al. \cite{ALLEN1998} described, analyzed and compared the finite element and difference methods for parabolic SPDEs driven by additive white noise. Du et al. \cite{DU2002} investigated numerical solutions of linear SEEs perturbed by special additive noises, ranging from the space time white noise to colored noises generated by some infinite dimensional Brownian motions with a prescribed covariance operator. Yan \cite{yan2005galerkin} studied the finite element method for linear SEEs with multiplicative noise in multidimensional case. The case of strong convergence of nonlinear SEEs is generally more subtle and challenging, and has received widely attention in the research community in recent years. For instance, Kloeden et al. \cite{JENTZEN2008,KLOEDEN2011} proposed a discretization based on the Galerkin method in space and exponential integrator in time for the nonlinear SEEs with cylindrical additive noise. Kruse \cite{KRUSE2014} analysed the strong convergence error for a finite element method/linear implicit Euler spatio-temporal discretization of semilinear SEEs with multiplicative noise and Lipschitz continuous nonlinearities, and deduced the optimal error estimates. Wang \cite{WANGXJ2017} derived strong convergence results for a spatio-temporal discretization of the semilinear SEEs with additive noise, where the approximation in space was performed by a standard finite element method and in time by a linear implicit Euler method. Moreover it was shown how exactly the strong convergence rate of the full discretization relies on the regularity of the driven process. Kov$\acute{\mathrm{a}}$cs et al. \cite{Kovcs2018} used Euler type splitstep method to study the semidiscretisation in time of the stochastic Allen-Cahn equation perturbed by smooth additive Gaussian noise, and showed that the strong convergence rate is 1/2 with respect to the step size. Liu et al. \cite{liu2021strong}  proposed a general theory of optimal strong error estimation for some  drift-implicit Euler schemes of a second-order nonlinear SPDE with monotone drift driven by a multiplicative infinite-dimensional Wiener process. 

In this paper, we consider the SEEs with both multiplicative force noise and random diffusion coefficient field, which has not yet been addressed in the literature to the best of our knowledge. The main contributions/novelties of this paper are as follows:

$\bullet$ The well-posedness of the considered stochastic equation is established. 
That is, the existence, uniqueness, and stability of the mild solution is proved.

$\bullet$ The diffusion coefficient considered in the current work is a log-Whittle-Mat\'ern Gaussian random field with
a parametrized covariance function whose regularity can be controlled by a parameter.
Therefore different cases can be tested and compared in a convenient way. 

$\bullet$ A sampling approach called approximation circulant embedding with padding \cite{DIETRICH1997,WOOD1994,NEWSAM1994} is employed to render the equation solvable.  
Then for each sample diffusion coefficient, a time-stepping scheme based on a semi-implicit Euler-Maruyama approach is constructed for the resulting equation. The standard piecewise linear finite element method is employed for the spatial discretization. 
The main theoretical result is the proof of the strong convergence 
rate $\mathcal{O}(h^{2-\varepsilon_0}+\Delta t^{\frac{1}{2}})$ of the full discretization
under certain assumptions, where $\varepsilon_0$ is an infinitesimal positive number, $h$ and $\Delta t$ are respectively the spatial and temporal mesh sizes.

The paper is organised as follows. In Section \ref{sec2}, we establish the well-posedness of the considered problem under given assumptions. The sampling method for the random diffusion coefficient field as well as the spatio-temporal full discretization are presented in Section \ref{sec3}. We devote to deriving the strong error estimate of the proposed fully discrete scheme by using semigroup approach and the stochastic calculus tools in Section \ref{sec5},  and validate the theoretical results by numerical experiments in Section \ref{numericalresult}.

\section{Problem and its well-posedness}\label{sec2}

We start by defining our problem. Let 
$T>0$, 
$D:=(0,1)$, $L^2(D)$ and $H_0^{\gamma}(D)$ are classical Sobolev 
spaces, $\gamma\ge0$. $\mathcal{L}(L^2(D))$ represents the space of bounded linear operators $A$: $L^2(D)\to L^2(D)$ equipped with operator norm $\rVert A\rVert_{_{\L(L^2(D))}}=\sup\limits_{u\ne0}\frac{\rVert Au\rVert_{_{L^2(D)}}}{\rVert u\rVert_{_{L^2(D)}}}$.  
$(\Omega,\mathcal{F},\mathcal{F}_t,\mathbb{P})$ is a filtered probability space 
with a normal filtration $\{\mathcal{F}_t:t\ge0\}$.
Additionally, we denote by $L^2(\Omega,L^2(D))$ the space of all random variables $X: \Omega\to L^2(D)$, such that
$$\| X(\omega) \|_{_{L^2(\Omega,L^2(D))}}<+\infty,\ \ \forall\omega\in\Omega,$$
where the norm $\| \cdot\|_{_{L^2(\Omega,L^2(D))}}$ is defined by
\be\label{L2Omega}
\rVert X(\omega) \rVert_{_{L^2(\Omega,L^2(D))}}:=\mathbb{E}[\rVert X(\omega) \rVert_{_{L^2(D)}}^2]^{\frac{1}{2}}
\ee
with $\mathbb{E}[\cdot]$ being the expectation in the probability space $(\Omega,\mathcal{F},\mathbb{P})$.
$L^2(\Omega,L^2(D))$ is also known as the space of the mean-square integrable random variables.
Let
$W(t,x)$ be a $\mathcal{F}_t$-adapted $H_0^{\gamma}(D)$-valued Wiener process with covariance operator $Q$, where 
$Q$ is a positive definite and symmetric operator with orthonormal eigenfunctions
\{$\phi_j(x)\in H_0^{\gamma}(D):j\in\mathbb{N}$\} and corresponding positive eigenvalues $\{q_j\}$; 
see, e.g., \cite{yan2005galerkin,KRUSE2014,WANGXJ2017} for more details.

Let $Q^{\frac{1}{2}}(H_0^{\gamma}(D)):=\{Q^{\frac{1}{2}}v: v\in H_0^{\gamma}(D)\}$. 
Let $\L_Q$ be the set of linear operators 
$B: Q^{\frac{1}{2}}(H_0^{\gamma}(D))\to L^2(D)$, which satisfies 
\bex
\Big(\sum_{j=1}^\infty \rVert BQ^\frac{1}{2}\phi_j \rVert_{_{L^2(D)}}^2\Big)^\frac{1}{2}<+\infty.
\eex
$\L_Q$ endowed with the norm 
$\rVert B \rVert_{\L_Q}:=\Big(\sum_{j=1}^\infty \rVert BQ^\frac{1}{2}\phi_j \rVert_{_{L^2(D)}}^2\Big)^\frac{1}{2}$ 
is actually the space of Hilbert-Schmidt operators \cite{gohberg1990hilbert}. We will also use the space $L^2(\Omega,\L_Q)$ of all random Hilbert-Schmidt operators 
$B: \Omega\to \L_Q$, equipped with the norm
\bex
\rVert B(\omega) \rVert_{_{L^2(\Omega,\L_Q)}}:=\mathbb{E}[\rVert B(\omega) \rVert_{_{\L_Q}}^2]^{\frac{1}{2}}.
\eex
Throughout the paper we use $c$, with or without subscripts, to mean generic
positive constants ({\it independent of $\omega$ in particular}), which may not be the same at different occurrences.

Our point of interest is the SEE with random diffusion coefficient and
multiplicative noise, written in the abstract form:
\begin{equation}  
\begin{aligned}\label{for12}
du(x,t)&=(-Lu+f(u))dt+G(u) dW(x,t),\  0<{t}<T,\ x\in D,\\
u(x,t)&=0,\ 0\leq t\leq T,\ x\in\partial{D}, \\
u(x,0)&=u_0(x),\ x\in \bar{D},
\end{aligned}
\end{equation}
where $L$ is the elliptic operator $-\partial_x(a(x,\omega)\partial_x)$ 
with the coefficient $a(x,\omega)$ 
being a log-Gaussian random field with the scale parameter $\mathscr{ \varepsilon}$, i.e., 
\begin{align}\label{form13}
	a(x,\omega) & = \mathscr{ \varepsilon} e^{z(x, \omega)}.
\end{align}
This type of random diffusion coefficient field has received a lot of attention in the study of uncertainty quantification (UQ) problems \cite{babuvska2007stochastic,LORD2014}, 
and appeared in some applications, e.g., geostatistical modelling \cite{Wadsworth14,Kazashi19}.
We consider the random field $z(x,\omega)$ 
in \eqref{form13} 
to be a mean-zero Whittle-Mat\'ern Gaussian random field, which is a stationary random field with the covariance function
\begin{equation}\label{form14}
	c_{q}(x):=\frac{\sqrt{2}\Gamma(q+1/2)}{\Gamma(q)}\int_{0}^{\infty}(\frac{2}{\pi})^{1/2}\cos(\lambda x)\frac{1}{(1+\lambda^2)^{q+1/2}}d\lambda, \ x\in [0,1],\ q>2,
\end{equation}
where $\Gamma(\cdot)$ is the Gamma function.

The theoretical result established in this paper depends on the following assumption on the nonlinear drift term $f(\cdot)$:
	\begin{align}
		\Vert f(v)\Vert_{_{L^2(D)}}&\leq c(1+\Vert v\Vert_{_{L^2(D)}}),\  \forall v\in L^2(D),\label{linarofF}\\
			\Vert f(v_1)-f(v_2)\Vert_{_{L^2(D)}}&\leq c(\Vert v_1-v_2\Vert_{_{L^2(D)}}),\ \ \forall v_1,v_2\in L^2(D).\label{lipofF}
	\end{align}
These assumptions are often used to prove the existence and uniqueness of the solution for SPDEs, see, e.g., \cite{KRUSE2014,LORD2014}. 

We are interested in the mild solution of problem (\ref{for12}) in the It\^o sense \cite{DAPA2014},
defined by
\begin{equation}\label{for17}
	u(t)=S(t)u_0+\int_{0}^{t}S(t-\tau)f(u(\tau))d\tau+\int_{0}^{t} S(t-\tau)G(u(\tau)) dW(\tau),
\end{equation}
where $S(t):=e^{-tL}$ is a semigroup generated by the operator $L$ \cite{Engel1999OneparameterSF}.
The well-posedness of the problem \eqref{for12} thus consists in verifying that 
the integrals in \eqref{for17} are well defined and a function $u$ satisfying the integral equation
\eqref{for17} uniquely exists. 
We first notice that 
the realization of the random field $a(x,\omega)$ given in \eqref{form13} is $2$ times mean-square differentiable due to $q>2$ \cite{LORD2014}. Thus, almost surely ($\mathbb{P}$-a.s.), $a(x,\omega)\in C^1(\bar{D})$ and $0<a_{min}(\omega)\leq a(x,\omega)\leq a_{max}(\omega)<\infty$, where $a_{min}(\omega)$ and $a_{max}(\omega)$ represent respectively the essential infimum and supremum of $a(x,\omega)$. 

In order to well define the integral 
$\int_{0}^{t}S(t-s)G(u(s))dW(s)$ and prove the existence and uniqueness of mild solution \eqref{for17}, we assume that there exists
$a_{min}$ and $a_{max}$ such that 
\begin{equation}\label{conditionofa}
	0<a_{min}\leq a_{min}(\omega) \leq a_{max}(\omega)\leq a_{max}<+\infty, \ \ \mathbb{P}\mbox{-a.s.}
\end{equation}
One verifies readily that $\mathcal{D}(L)=H^2(D)\cap H_0^1(D)$ almost surely \cite{BABUSKA2004}, where $\mathcal{D}(L)$ is the domain of the operator $L$.

We also need some assumptions on the nonlinear term $G$, which are collected below:

- $L^{s}G(\cdot)$, $0\leq s\leq\frac{1}{2}$, is a mapping from $L^2(D)$ to $\L_Q$ such that:
\be\label{linearofG}
\Vert L^{s}G(v)\Vert_{_{\L_Q}}\leq c\big(1+\Vert v\Vert_{_{L^2(D)}}\big),\ \ \forall v\in L^2(D),
\ee
\be\label{lipofG}
\big\Vert L^s\big(G(v_1)-G(v_2)\big)\big\Vert_{_{\L_Q}}
\leq c\Vert v_1-v_2\Vert_{_{L^2(D)}},\ \ \forall v_1,v_2\in L^2(D).
\ee

- $\{G(v(\tau)):\tau\in[0,T]\}$ is a predictable $\L_Q$-valued process, 
such that  
\be\label{Gp}
\int_{0}^{T} \mathbb{E}[\rVert G(v)\rVert_{_{\L_Q}}^2]\, d\tau<+\infty, \ \forall v\in L^2(D). 
\ee
\begin{remark}
The assumptions \eqref{linearofG} and \eqref{lipofG} impose some restrictive conditions on the nonlinear term $G(\cdot)$, which include a combination of the nonlinear term $G(\cdot)$, the elliptic operator $L$, and the covariance operator $Q$. Notice that the similar or more general assumptions have been considered in \cite{HAUSENBLAS2003,yan2005galerkin,andersson2016weak}.
\end{remark}

We define the space $\mathbb{L}_2^t$ for $t\in [0,T]$, which is
the Banach space of $L^2(D)$-valued predictable processes $\{v(\tau):\tau\in[0,t]\}$,
equipped with the norm 
\bex
\Vert v\Vert_{\mathbb{L}_2^t}:=\sup\limits_{\tau\in [0,t]}\Vert v(\tau)\Vert_{_{L^2(\Omega,L^2(D))}}<+\infty. 
\eex
Now we are in a position to state and prove the well-posedness of the mild solution to \eqref{for12}.

\begin{theorem}\label{the22}
	Suppose that the initial data $u_0\in L^2(\Omega,L^2(D))$.
	Then, there exists a unique mild solution $u\in \mathbb{L}_2^T$ to \eqref{for12}. 
	Furthermore, the following stability inequality holds
	\begin{equation}\label{theresult}
		\sup_{t\in [0,T]}\Vert u(t)\Vert_{_{L^2(\Omega,L^2(D))}}\leq c_{_T}(1+\Vert u_0\Vert_{_{L^2(\Omega,L^2(D))}}).
	\end{equation} 
\end{theorem}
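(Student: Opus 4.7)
The plan is to realize $u$ as the unique fixed point of the map
\[
(\mathcal{T}v)(t) := S(t)u_0+\int_{0}^{t}S(t-\tau)f(v(\tau))d\tau+\int_{0}^{t} S(t-\tau)G(v(\tau)) dW(\tau)
\]
on $\mathbb{L}_2^T$, endowed either with its native sup-norm or with the equivalent Bielecki-weighted norm $\|v\|_\lambda := \sup_{t\in[0,T]} e^{-\lambda t}\|v(t)\|_{L^2(\Omega,L^2(D))}$. The key preliminary observation, made possible by the uniform ellipticity assumption \eqref{conditionofa}, is that $-L$ generates, $\mathbb{P}$-a.s., a contractive analytic semigroup with $\|S(t)\|_{\L(L^2(D))}\leq 1$ and with the bound \emph{independent of $\omega$}; this is what permits us to treat $S(t)$ essentially as if it were deterministic when interchanging it with expectations.

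I will first show that $\mathcal{T}$ sends $\mathbb{L}_2^T$ into itself. The free term obeys $\|S(t)u_0\|_{L^2(\Omega,L^2(D))}\leq \|u_0\|_{L^2(\Omega,L^2(D))}$. The deterministic drift is controlled by Jensen's inequality together with the linear growth assumption \eqref{linarofF}, and the stochastic convolution is handled by It\^o's isometry together with \eqref{linearofG} (at $s=0$) and the predictability hypothesis \eqref{Gp}:
\[
\mathbb{E}\Big\|\int_0^t S(t-\tau)G(v(\tau)) dW(\tau)\Big\|_{L^2(D)}^2 = \int_0^t \mathbb{E}\|S(t-\tau)G(v(\tau))\|_{\L_Q}^2 d\tau \leq c\int_0^t\big(1+\|v(\tau)\|_{L^2(\Omega,L^2(D))}^2\big)d\tau.
\]
For the contraction step, feeding the same machinery with the Lipschitz hypotheses \eqref{lipofF} and \eqref{lipofG} produces
\[
\|\mathcal{T}v_1(t)-\mathcal{T}v_2(t)\|_{L^2(\Omega,L^2(D))}^2 \leq c\int_0^t \|v_1(\tau)-v_2(\tau)\|_{L^2(\Omega,L^2(D))}^2 \,d\tau.
\]
Multiplying both sides by $e^{-2\lambda t}$ and bounding $\int_0^t e^{2\lambda(\tau-t)}d\tau \leq 1/(2\lambda)$ gives $\|\mathcal{T}v_1-\mathcal{T}v_2\|_\lambda^2 \leq (c/(2\lambda))\|v_1-v_2\|_\lambda^2$, which is a strict contraction for $\lambda$ sufficiently large. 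The Banach fixed point theorem then delivers the unique mild solution $u\in\mathbb{L}_2^T$.

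The stability bound \eqref{theresult} will come from applying the same linear-growth estimates to $u=\mathcal{T}u$ itself, producing an inequality of the form
\[
\|u(t)\|_{L^2(\Omega,L^2(D))}^2 \leq c\big(1+\|u_0\|_{L^2(\Omega,L^2(D))}^2\big)+c\int_0^t \|u(\tau)\|_{L^2(\Omega,L^2(D))}^2\, d\tau,
\]
to which Gronwall's inequality applies, yielding a constant $c_T$ that depends on $T$ but not on $\omega$. The one real subtlety, and the step on which I would spend the most care, is bookkeeping the $\omega$-dependence of $L$ and $S(t)$: since It\^o's isometry and the linear-growth estimates for $G$ are cast in the $\L_Q$-norm, the pointwise-in-$\omega$ semigroup contraction supplied by \eqref{conditionofa} must be used to pull the operator through the expectation uniformly. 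Once this uniformity is secured, every remaining inequality reduces to standard arguments familiar from the deterministic-coefficient mild solution theory.
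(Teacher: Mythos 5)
Your proposal is correct and follows essentially the same route as the paper: the paper defines the identical fixed-point map $\mathcal{M}$, invokes the a.s.\ contraction bound $\Vert S(\tau)\Vert_{\L(L^2(D))}\leq 1$ (uniform in $\omega$ thanks to \eqref{conditionofa}) together with It\^o isometry and the growth/Lipschitz assumptions, and then defers the details to \cite[Theorem 10.26]{LORD2014}. Your write-up merely fills in those details explicitly, with the Bielecki-weighted norm being a standard cosmetic variant of the contraction step.
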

\begin{proof}
	We define the integral operator $\mathcal{M}$ by: for all $v\in \mathbb{L}_2^t$, $0\leq t\leq T$,
	\begin{equation}\label{f310}
		(\mathcal{M} v)(t):=S(t)u_0+\int_{0}^{t}S(t-\tau)f(v(\tau))d\tau+\int_{0}^{t}S(t-\tau)G(v(\tau)) dW(\tau).
	\end{equation}
	Obviously if there is a fixed point $u\in \mathbb{L}_2^T$ for the operator $\mathcal{M}$, then 
	this fixed point is a mild solution defined by \eqref{for17}. 
	The proof basically consists of two steps: 1) prove that the integral operator $\mathcal{M}$ is well-defined under 
	the assumptions given above; 2) use the Fixed Point Theorem \cite[Theorem 1.10]{LORD2014} to establish the
	existence of a unique mild solution.
	This can be done by following the same lines as 
	in \cite[Theorem 10.26]{LORD2014}, using 
	the imposed assumptions and a number of known results including 
	the Karhunen-Lo{\`e}ve (K-L) expansion of $Q$-Wiener process $W(s)$, It$\hat{\mathrm{o}}$ isometry, and the inequality 
	\be\label{SGS}
	\Vert S(\tau)\Vert_{_{\L(L^2(D))}}\leq 1, \ \ \forall \tau\in (0,T)
	\ee
	for the semigroup $S(\tau)$. We emphasize here that $S(\tau)$ involves the random diffusion coefficient, thus 
	the inequality \eqref{SGS} must be understood in the sense of almost surely. 
	This, compared to the case of deterministic diffusion coefficient
	(see, e.g., 
	\cite[Theorem 10.26]{LORD2014} for details), causes no essential difficulty in establishing the desired results.
\end{proof}

\section{Random field sampling and fully discrete scheme}\label{sec3}
Our first goal in this section is to employ a method called approximation circulant embedding with padding to uniformly sample the random diffusion coefficient $a(x,\omega)$. 
It is notable that some other sampling methods are available, 
such as turning bands method \cite{MANTOGLOU1982,DIETRICH1995} and quadrature sampling method \cite{SHINOZUKA1971,SHINOZUKA1972}. 
However the turning bands method is only applicable to isotropic Gaussian random fields, 
and the quadrature sampling method needs to know the spectral density function of the covariance function of random fields. 
One of the advantages of the sampling method we employ here is its applicability to stationary Gaussian random fields 
including isotropic random fields, and does not require prior knowledge of the spectral density function of the covariance function.

It is obvious from \eqref{form13} that if we want to sample $a(x,\omega)$, we only need to sample $z(x,\omega)$. 
The crucial ingredient of the circulant embedding sampling is that the target covariance matrix can be embedded into a large circulant matrix, which can be decomposed by discrete Fourier transform. Then a new random field based on the combination of decomposition factors is constructed, which will be used to obtain the approximations of $z(x,\omega)$ for $x\in\bar{D}$.

Another purpose in the section is to present semi-implicit Euler-Maruyama scheme and finite element method to discrete problem \eqref{for12} in time and space, respectively. We start by random field sampling.

\subsection{Approximation circulant embedding with padding} 
Consider uniform sampling of random field $z(x,\omega)$ in $\bar{D}:=[0,1]$. We set 
$$ 0= x_1\leq...\leq x_{P}=1, \ \ \Delta x=\frac{1}{P-1}=x_{p+1}-x_{p},\ \ p=1,...,P-1.$$
Let $C:=(c_{ij})$ denote the $P\times P$ covariance matrix with respect to $z(x_p,\omega)$ for $p=1,...,P$, where $c_{ij}:={\rm{cov}}(z(x_{i},\omega),z(x_{j},\omega))=c_{q}(|x_{i}-x_{j}|)$ for $i, j=1,...,P$. If we set $c_{i-j}:=c_{ij}$, then 
\begin{equation}\label{Toeplitz}     
C=\left(                 
\begin{array}{cccc}   
c_0 & c_{-1} & \cdots&c_{1-P}\\ 
c_{1} & c_{0} & \cdots&c_{2-P}\\
\vdots& \ddots &\ddots& \vdots\\
c_{P-1}&\cdots&c_{1}&c_{0}                          
\end{array}
\right).              
\end{equation}
One verifies readily that ${C}$ is a symmetric Toeplitz matrix, and it can be well defined by its first column $\boldsymbol{c_1}=(c_0,...,c_{P-1})^T\in\mathbb{R}^P$. If we define 
$\boldsymbol{\bar{c}_1}:=\tbinom{\boldsymbol{c_1}}{\boldsymbol{0}}\in\mathbb{R}^{P+M}$ with $\boldsymbol{0}\in\mathbb{R}^{M}$ be a zero padding vector, a new symmetric Toeplitz matrix denoted by $\bar{C}\in \mathbb{R}^{(P+M)\times(P+M)}$ can be generated from $\boldsymbol{\bar{c}_1}$. Next, we carry out the minimal circulant extension \cite[Definition 6.48]{LORD2014} to $\bar{C}$ such that it can be embeded into a bigger circulant matrix denoted by $\tilde{\bar{{C}}}\in\mathbb{R}^{2\tilde{P}\times2\tilde{P}}$ for $\tilde{P}:=P+M-1$. Let  $\tilde{\boldsymbol{\bar{c}}}_1$ be the first column of $\tilde{\bar{{C}}}$, $W^{*}$ represent the conjugate transpose of discrete Fourier matrix $W\in\mathbb{C}^{2\tilde{P}\times2\tilde{P}}$, and ${{d}}_j$ be the $j$-th entry of $\sqrt{2\tilde{P}}{{W}}^{*}\tilde{\boldsymbol{\bar{c}}}_1$. Then by Fourier representation, the circulant matrix $\tilde{\bar{{C}}}$ can be decomposed as follows:
 \begin{equation*}\label{dFtwithpadding}
	\tilde{\bar{{C}}}=W(\Lambda_{+}-\Lambda_{-})W^{*},
\end{equation*}
where ${{\Lambda}}_{\pm}$ represents the diagonal matrix whose $j$-th diagonal element is $\pm\lambda_j:=\max\{0,\pm {{d}}_j\}$, i.e.,
\be
{\Lambda}_{\pm}={\rm{diag}}(\pm\lambda_1,\dots,\pm\lambda_{2\tilde{P}}).\label{poslambda}
\ee

Let $\boldsymbol{z}:=\big(z(x_1,\omega),\dots,z(x_{P},\omega)\big)^T$. 
Our main goal is to take the sample approximations to the random vector $\boldsymbol{z}$. To this end, we construct a new random field vector $\boldsymbol{{Z}}$, defined by
\begin{equation}\label{newrandom}
	\boldsymbol{{Z}}:={W}{\Lambda}_{+}^{\frac{1}{2}}\boldsymbol{\xi},\ \  \boldsymbol{\xi}\sim{\rm{CN}}(\boldsymbol{0},2I_{2\tilde{P}}),
\end{equation}
where ${\rm{CN}}(\cdot,\cdot)$ denotes the complex Gaussian distribution \cite[Definition 6.15]{LORD2014}. It's readily to deduce that $\boldsymbol{{Z}}\sim{\rm{CN}}(\boldsymbol{0},2(\tilde{\bar{{C}}}+\tilde{\bar{{C}}}_{-}))$ with $\tilde{\bar{{C}}}_{-}:={{W}}{{\Lambda}}_{-}{{W}}^{*}$, which means both real and imaginary parts of $\boldsymbol{{Z}}$ obey real Gaussian distribution  N$(\boldsymbol{0},(\tilde{\bar{{C}}}+\tilde{\bar{{C}}}_{-}))$. Notice that $\Vert \tilde{\bar{{C}}}_{-}\Vert_2\leq\rho({{\Lambda}}_{-})$ with  $\rho({{\Lambda}}_{-})$ representing the spectral radius of ${{\Lambda}}_{-}$, and it is known that $\rho({{\Lambda}}_{-})$ can be small enough by increasing the dimension $M$ of zero padding vector \cite{WOOD1994}. Therefore  $\tilde{\bar{{C}}}$ can be approximately treated as a non-negative definite matrix when the dimension $M$ is large enough, which is crucial for obtaining a good approximation of the random vector $\boldsymbol{z}$. Then the sample approximations of the random vector $\boldsymbol{z}$ can be provided by truncating the real or imaginary part of $\boldsymbol{Z}$.

The sampling procedure is summarized as follows:

i) Embed $C$ shown in \eqref{Toeplitz} into the padded circulant matrix $\tilde{\bar{{C}}}\in\mathbb{R}^{2(P+M-1)\times2(P+M-1)}$ with dimension $M$ large enough;

ii) Compute $\Lambda_{+}$ by \eqref{poslambda};

iii) Construct a new random field vector $\boldsymbol{Z}$ by \eqref{newrandom} and take its  real or imaginary part, denoted by $\boldsymbol{Z_1}\in\mathbb{R}^{2(P+M-1)}$;

iv) Truncate the first $P$ terms of $\boldsymbol{Z_1}$ and use it as an approximation to the random vector $\boldsymbol{z}$.

It is worthwhile to point out that the sampling method described above is convenient in the sense that it can simultaneously produce two sets of independent and identically distributed (i.i.d) samples in one sampling. 

For each of the sampling data of the random diffusion coefficient, the problem \eqref{for12}
becomes a SEE with randomness only on the $G$-term. 

\subsection{Spatio-temporal discretization}\label{sec4.2}

In this subsection we propose and analyze a discretization method for the problem \eqref{for12}. 
The proposed method is based on a finite element discretization in space and semi-implicit Euler-Maruyama approach in time.

We first describe the $\mathbb{P}_1$ finite element method for the spatial discretization.
Let $K>0, h=\frac{1}{K+1}, x_0=0, x_k=kh, I_k=[x_{k-1},x_k], k=1,\dots, K+1$. Define the finite element space $V_h$ by
$$V_h:=\lbrace v \in C^0(\bar{D}): v|_{I_{k}} \in \mathbb{P}_1(I_k), \ k=1,...,K+1;\  v(0)=v(1)=0 \rbrace,$$
where $\mathbb{P}_1(I_k)$ denotes the space of the polynomials of degree $\le$ 1 defined in $I_k$. Let $\varphi_i(x)$ be the nodal basis functions satisfying $\varphi_i(x_j)=\delta_{ij}, i, j=0,1,\dots,K+1$.
Then $V_h$=${\rm{span}}\lbrace \varphi_1(x),...,\varphi_K(x) \rbrace$. 
Let $\mathcal{P}_{h}$ be the orthogonal projection from $L^2(D)$ to $V_h$, and $\mathcal{P}^{w}_{J}$ be the projection from $H^{\gamma}_0(D)$ to 
the finite-dimensional space ${\rm{span}}\{\phi_1,\dots,\phi{_J}\}$. 
The spatial semi-discrete scheme of the problem \eqref{for12} reads: find finite element approximation $u_h(t)\in V_h$ such that
\begin{equation}
	\begin{aligned}\label{f52}
du_h(t)=\big(-L_hu_h(t)+\mathcal{P}_hf(u_h(t))\big)dt&+\mathcal{P}_h\big(G(u_h(t))\mathcal{P}^{w}_{J}dW(t)\big),\ \forall\ 0<t\leq T,\\
u_h(0)&=\mathcal{P}_h u_0,
	\end{aligned}
\end{equation} 
where $L_h$: $V_h\to V_h$ is the finite-dimensional operator defined by
\begin{equation*}
(L_hw,v):=(a(x,\omega)\partial_x w, \partial_x v), \ \ \forall w,v\in V_h 
\end{equation*}
with $(\cdot,\cdot)$ be the $L^2$-inner product. 

We now describe the temporal discretization. 
Let $N$ be a positive integer, $\Delta t:=T/N$ be the uniform time step. 
Then the spatio-temporal full discretization of the problem \eqref{for12}, called hereafter the finite element method/semi-implicit Euler Maruyama scheme, reads:
\begin{equation}
	\begin{aligned}\label{f53}
(I+\Delta tL_h)u^{n+1}_{h}=u^n_{h}&+\Delta t\mathcal{P}_hf(u^n_{h})+\mathcal{P}_h\big(G(u^n_{h})\mathcal{P}^{w}_{J}\Delta W^n\big),\ n=0,...,N-1,\\
 u^0_{h}&=\mathcal{P}_hu_0,
	\end{aligned}
\end{equation}
where $\mathcal{P}^{w}_{J}\Delta W^n:=\sum_{j=1}^{J}  \sqrt{q_j}(\beta_j(t_{n+1})-\beta_j(t_{n})) \phi_j$ with $\beta_j(t)$ be the i.i.d $\mathcal{F}_t$-Brownian motions. 

Before carrying out the error analysis, we briefly discuss the implementation of the above scheme. 
The weak formulation of \eqref{f53} is:
\be\label{variationalf}
(u_h^{n+1},v_h)+\Dt(a(x,\omega)\partial_x u_h^{n+1},\partial_x v_h)=(g_h^n,v_h),\ \ v_h\in V_h,
\ee
where $g_h^n:=u_h^n+\Dt f(u_h^n)+G(u_h^n)\mathcal{P}_J^w\Delta W^n$. 
Expressing the solution $u_h^{n+1}$ under the basis $\{\varphi_k\}_{k=1}^K$, 
\bex
u^{n+1}_{h}(x)=\sum_{k=1}^{K}\hat{u}^{n+1}_{k}\varphi_k(x),\ n=0,...,N-1,
\eex
and taking the test function $v_h$ in \eqref{variationalf} to be each of the basis functions,
we arrive at the following linear system:
\begin{equation*}
	(M+\Delta tS)\hat{\boldsymbol{u}}^{n+1}_{h}=M\hat{\boldsymbol{g}}^n_{h},\ n=0,\dots,N-1,
\end{equation*} 
where $\hat{\boldsymbol{u}}^{n+1}_{h}:=(\hat{u}^{n+1}_{1},\dots,\hat{u}^{n+1}_{K})^T$, $\hat{\boldsymbol{g}}^n_{h}$ is the expansion coefficient vector of $g_h^n$ under the basis $\{\varphi_k\}_{k=1}^K$. $M$ and $S$ are respectively the mass and stiffness matrix defined by
\begin{equation*}
	\begin{aligned}
	M&=(m_{ij}),\ \ m_{ij}:=(\varphi_i,\varphi_j ),\ \ \forall i,j=1,...K,\\
	S&=(s_{ij}),\ \  s_{ij}:=(a(x,\omega)\partial_x\varphi_i,\partial_x\varphi_j), \ \ \forall i,j=1,...,K.
	\end{aligned}
\end{equation*}
In actual calculation,  we will use $\frac{1}{2}\big(a(x_{k-1},\omega)+a(x_{k},\omega)\big)$ to approximate $a(x,\omega)$ for $x\in I_k$.
Therefore the overall cost of the scheme is roughly equal to solving a linear system with random variable coefficients at each time step.

\section{Error estimate}\label{sec5}
This section is devoted to analyzing the strong convergence error of the spatio-temporal full discretization \eqref{f53} to the mild solution \eqref{for17}. Here, strong convergence is understood in the sense of convergence with respect to the norm $\|\cdot\|_{L^2(\Omega,L^2(D))}$. We first note that the full-discrete scheme \eqref{f53} can be rewritten 
under form:
\begin{equation}\label{invfulldis}
	u^{n+1}_{h}=(I+\Delta tL_h)^{-1}\Big(u^n_{h}
	+\Delta t\mathcal{P}_{h}f(u^n_{h})
	+\mathcal{P}_{h}G(u^n_{h})\mathcal{P}^w_{J}\Delta W^n\Big),\ n=0,\dots,N-1.
\end{equation} 
It is readily seen that $L_h$ is reversible in $V_h$, 
i.e., $L_h^{-1}v_h$ is well defined for all $v_h\in V_h$. We now extend the definition of $L_h^{-1}$ 
to all $v\in L^2(D)$ by $L_h^{-1}v = L_h^{-1}\mathcal{P}_{h}v$.
By the assumption on $a(x,\omega)$, we know that for almost every $\omega\in\Omega$, 
$L_h^{-1}$ is a non-negative definite operator from $L^2(D)$ to $V_h$. In fact, for all $v\in L^2(D)$, there
exists $w_h\in V_h$ such that $L_h w_h = \mathcal{P}_{h}v$, and thus 
\begin{align}
	(L_h^{-1}v, v)
	&=(L_h^{-1}\mathcal{P}_{h}v, v) = (L_h^{-1}\mathcal{P}_{h}v, \mathcal{P}_{h}v)=(L_h^{-1}L_h w_h, L_h w_h)\notag\\
	& = (w_h, L_h w_h) = \big(a(x,\omega)\partial_xw_h, \partial_xw_h\big) \ge 0\notag.
\end{align}

Let $S_{h,\Delta t}^{n}:=(I+\Delta tL_h)^{-n}$. 
The fully discrete approximation can be expressed under the form:
\be
u^n_{h}=S_{h,\Delta t}^{n}\mathcal{P}_{h}u_0+\sum_{k=0}^{n-1}\Delta tS_{h,\Delta t}^{n-k}\mathcal{P}_{h}f(u^k_{h})+\sum_{k=0}^{n-1}\int_{t_k}^{t_{k+1}}S_{h,\Delta t}^{n-k}\mathcal{P}_{h}G(u^k_{h})\mathcal{P}^w_{J}dW(\tau).
\label{recur}
\ee
Subtracting \eqref{recur} from the mild solution \eqref{for17} gives 
\be\label{errdec}
u(t_n)-u^n_{h}=\theta_1+\theta_2+\theta_3
\ee
with $\theta_i, i=1,2,3$, representing
\begin{align}
	&\theta_1:=S(t_n)u_0-S_{h,\Delta t}^n\mathcal{P}_{h}u_0,\label{errore1}\\
	&\theta_2:=\sum_{k=0}^{n-1}\big(\int_{t_k}^{t_{k+1}}S(t_n-\tau)f(u(\tau))d\tau-\Delta tS_{h,\Delta t}^{n-k}\mathcal{P}_{h}f(u^k_{h})\big),\label{errore2}\\
	&\theta_3:=\sum_{k=0}^{n-1}\int_{t_k}^{t_{k+1}}\big(S(t_n-\tau)G(u(\tau))-S_{h,\Delta t}^{n-k}\mathcal{P}_{h}G(u^k_{h})\mathcal{P}^w_{J}\big)dW(\tau).\label{errore3}
\end{align}

Our goal in the following is to estimate $\theta_1$, $\theta_2$, $\theta_3$ separately in the sense of strong convergence. To this end, we first give some preliminaries that will be used in subsequent analysis.

$\bullet$ If the initial value $u_0\in L^2(\Omega,\mathcal{D}(L))$, then there exists a constant $c$ depended on $u_0$ such that the mild solution $u$ defined in \eqref{for17} satisfies the following temporal H{\"o}lder regularity: 
\be
\Vert u(\tau_2)-u(\tau_1)\Vert_{_{L^2(\Omega,L^2(D))}}\leq c(\tau_2-\tau_1)^{\frac{1}{2}},\ \ \forall\ 0\leq \tau_1\leq \tau_2\leq T\label{timereg}.
\ee
The proof of \eqref{timereg} can be done by following the same lines as in \cite[Lemma 10.27]{LORD2014}, which is omitted here. Basically, it makes use of the properties of the operator $L$ and its associated semigroup $S(t)$, satisfied in the sense of almost surely. 

$\bullet$ The operator $L$ and the induced semigroup $S(t)$ satisfy the following estimates, 
which is a straightforward extension of the classical results (see, e.g., \cite{KRUSE2014,WANGXJ2017}) to the sense of almost surely:

- For each $\alpha\ge 0$, there exists a constant $c$ such that
\begin{align}\label{posalpha}
	\| L^{\alpha}S(t)\|_{_{\L(L^2(D))}}\leq c t^{-\alpha},\ \forall t>0.
\end{align}

- For $\alpha\in[0,1]$, there exists a constant $c$ such that
\begin{align}\label{negalpha}
	\| L^{-\alpha}(I-S(t))\|_{_{\L(L^2(D))}}\leq ct^{\alpha},\ \forall t\ge 0.
\end{align}

$\bullet$ The nonlinear term $G$ satisfies
\be\label{f617}
\| G(v_1)\mathcal{P}^w_{J}-G(v_2)\mathcal{P}^w_{J}\|_{_{\L_Q}}\leq c\| v_1-v_2\|_{_{L^2(D)}},\ \ \forall v_1,v_2\in{L^2(D)}.
\ee
In fact, it follows from the assumption \eqref{lipofG}: for all
$v_1,v_2\in{L^2(D)}$, 
\begin{equation*}
	\begin{aligned}
		\| G(v_1)\mathcal{P}^w_{J}-G(v_2)\mathcal{P}^w_{J}\|_{_{\L_Q}}
		&=\Big[\sum_{j=1}^{\infty}\big\| \big(G(v_1)-G(v_2)\big)\mathcal{P}^w_{J}Q^{1/2}\phi_j\big\|_{_{L^2(D)}}^2\Big]^\frac{1}{2}\\
		&=\Big[\sum_{j=1}^{J}\big\| \big(G(v_1)-G(v_2)\big)Q^{1/2}\phi_j\big\|_{_{L^2(D)}}^2\Big]^\frac{1}{2}\\
		&\leq \big\| G(v_1)-G(v_2)\big\|_{_{\L_Q}}\leq c\| v_1-v_2\|_{_{L^2(D)}}.
	\end{aligned}
\end{equation*}

\subsection{Strong error estimate}
We first focus on strong error estimate for the term $\theta_1$. 
It is worth pointing out that, although our analysis is inspired by the work
\cite{KRUSE2014} based on the rational function approach, our proof makes full use of the standard framework of the finite element approximation to the linear parabolic equation as well as the fact that operator $L_h^{-1}$ is non-negative definite from $L^2(D)$ to $V_h$. 
Let
	\be\label{Tn}
	T_n:=\big(e^{-t_nL}-(I+\Delta tL_h)^{-n}\mathcal{P}_{h}\big).
	\ee
Then it follows from the definition \eqref{errore1} that $\theta_1=T_nu_0$.
\begin{lemma}[Error estimate of $\theta_1$]\label{lem63}
	Suppose $u_0\in L^2(\Omega,\mathcal{D}(L))$. 
	Then there exists a constant $c$ independent of $h$ and $\Delta t$ (but depends on $u_0$), 
	such that
	\bex
	\| \theta_1\|_{_{L^2(\Omega,L^2(D))}}\leq c(\Delta t+h^2),
	\eex
	where $\theta_1$ is given in \eqref{errore1}.
\end{lemma}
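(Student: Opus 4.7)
The plan is to introduce the intermediate, continuous-in-time but spatially discrete semigroup $e^{-t_n L_h}\mathcal{P}_{h}u_0$ and split
\begin{equation*}
\theta_1 = \underbrace{\big(e^{-t_nL}u_0-e^{-t_nL_h}\mathcal{P}_{h}u_0\big)}_{=:\,\theta_1^{\mathrm{sp}}}+\underbrace{\big(e^{-t_nL_h}-(I+\Dt L_h)^{-n}\big)\mathcal{P}_{h}u_0}_{=:\,\theta_1^{\mathrm{tm}}},
\end{equation*}
isolating a purely spatial and a purely temporal error. Both pieces will be controlled pathwise (i.e.\ $\mathbb{P}$-a.s.) with constants depending only on the deterministic envelope $[a_{\min},a_{\max}]$ from \eqref{conditionofa}; the final $L^2(\Omega,L^2(D))$ bound then follows by squaring and taking expectations, using $u_0\in L^2(\Omega,\mathcal{D}(L))$.

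For $\theta_1^{\mathrm{sp}}$ I would freeze a realization $\omega$ and invoke the classical error analysis for the deterministic parabolic problem $w_t+Lw=0$, $w(0)=u_0$, discretized by $\mathbb{P}_1$ finite elements. Introducing the Ritz projection $R_h:H_0^1(D)\to V_h$ associated with the symmetric coercive form $(a(\cdot,\omega)\partial_x\cdot,\partial_x\cdot)$, whose coercivity and continuity constants are controlled by $a_{\min}$ and $a_{\max}$, an Aubin--Nitsche duality argument gives $\|v-R_hv\|_{L^2(D)}\le c\,h^2\,\|Lv\|_{L^2(D)}$ pathwise. Decomposing the semi-discrete error as $\rho(t)+\vartheta(t)$ with $\rho:=(I-R_h)e^{-tL}u_0$ and $\vartheta:=R_h e^{-tL}u_0-e^{-tL_h}\mathcal{P}_{h}u_0\in V_h$, a standard energy estimate on $\vartheta$ combined with the self-adjoint smoothing $\|Le^{-tL}u_0\|\le\|Lu_0\|$ delivers $\|\theta_1^{\mathrm{sp}}\|_{L^2(D)}\le c\,h^2\,\|Lu_0\|_{L^2(D)}$.

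For $\theta_1^{\mathrm{tm}}$, $L_h$ is symmetric positive-definite on $V_h$ for a.e.\ $\omega$ under \eqref{conditionofa}, so a spectral decomposition $\{\mu_j(\omega),\psi_j(\omega)\}$ is available and reduces the estimate to the scalar rational-approximation inequality $|e^{-n\Dt\mu}-(1+\Dt\mu)^{-n}|\le c\,\Dt\,\mu$ for $\mu\ge0$ (a one-line Taylor estimate using that $s\mapsto e^{-s}s$ is bounded on $\mathbb{R}_+$, uniformly in $n$). Parseval then yields $\|\theta_1^{\mathrm{tm}}\|_{L^2(D)}\le c\,\Dt\,\|L_h\mathcal{P}_{h}u_0\|_{L^2(D)}$. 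Writing $L_h\mathcal{P}_{h}u_0=\mathcal{P}_{h}Lu_0+L_h(\mathcal{P}_{h}-R_h)u_0$, where $L_h R_h=\mathcal{P}_{h}L$ by the defining identity of $R_h$, the first piece is bounded by $\|Lu_0\|$, and the second by combining the inverse estimate $\|L_h w_h\|\le c h^{-2}\|w_h\|$ (constant depending only on $a_{\max}$) with the Ritz bound $\|(\mathcal{P}_{h}-R_h)u_0\|\le c h^2\|Lu_0\|$. Hence $\|\theta_1^{\mathrm{tm}}\|_{L^2(D)}\le c\,\Dt\,\|Lu_0\|_{L^2(D)}$ almost surely.

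The main obstacle is not any single inequality but rather careful bookkeeping: one must verify at every step that the constants (coercivity, $L^2$-stability of $\mathcal{P}_{h}$, inverse estimates, rational-approximation constants, smoothing bounds on $e^{-tL}$, equivalence of $\|L\cdot\|$ with the $H^2$-seminorm) depend only on the deterministic envelope $[a_{\min},a_{\max}]$ and not on the particular realization $\omega$; without this uniformity the pathwise bounds would not survive the squaring and integration over $\Omega$. Once the uniformity is confirmed, the triangle inequality together with $u_0\in L^2(\Omega,\mathcal{D}(L))$ (which makes $\mathbb{E}\|Lu_0\|_{L^2(D)}^2$ finite) yields
\begin{equation*}
\|\theta_1\|_{L^2(\Omega,L^2(D))}\le c\,(h^2+\Dt),
\end{equation*}
with $c$ absorbing the $u_0$-dependent factor, which is the asserted estimate.
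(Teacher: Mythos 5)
Your proposal is correct and follows essentially the same route as the paper: the identical split of $\theta_1$ into the spatial semidiscretization error and the temporal rational-approximation error, a standard parabolic finite element argument (your Ritz-projection $\rho$--$\vartheta$ decomposition is equivalent to the paper's $\rho(t)=(L^{-1}-L_h^{-1})Ly(t)$ construction, since $L_h^{-1}\mathcal{P}_hL=R_h$), and the same scalar bound $|e^{-nx}-(1+x)^{-n}|\le cx$ via the spectral decomposition of $L_h$. If anything you are slightly more careful than the paper, which leaves the factor $\|L_h\mathcal{P}_hu_0\|_{L^2(D)}$ unestimated, whereas you bound it by $c\|Lu_0\|_{L^2(D)}$ through $L_hR_h=\mathcal{P}_hL$ together with an inverse estimate.
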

\begin{proof}
	Obviously, $\theta_1$ characterizes the error between the exact solution 
	$e^{-t_nL}u_0$ and the full discrete solution 
	$(I+\Delta tL_h)^{-n}\mathcal{P}_{h}u_0$, which can be splited into two parts:
	\be\label{errorsplit}
	\theta_1 = e^n_1 + e^n_2,
	\ee
	where
	\bex
	e^n_1:=e^{-t_nL}u_0-e^{-t_nL_h}\mathcal{P}_{h}u_0
	\eex
	is the spatial discretization error, while
	\bex
	e^n_2:=\big(e^{-t_nL_h}-(I+\Delta tL_h)^{-n}\big)\mathcal{P}_{h}u_0
	\eex
	is the temporal discretization error. 
	Clearly, we have $e^n_1= y(t_n) - y_h(t_n)$, where 
	$y(t)\in H_0^1(D)$ and $y_h(t) \in V_h$  
	are the solutions of the parabolic equation
	\bex
	\frac{\partial y(t)}{\partial t} + Ly(t)=0, \ \ \  y(0)=u_0
	\eex
	and its finite element semi-discrete equation
	\bex
	\frac{\partial y_h(t)}{\partial t} + L_h y_h(t)=0, \ \ \ y_h(0) = \mathcal{P}_{h}u_0
	\eex
	respectively. 
	Let
	\begin{equation*}\label{definitionrhoande}
		e_1(t):=y(t)-{y}_h(t),\ \ \ \ \rho(t):=L_h^{-1}\frac{\partial e_1(t)}{\partial t}+e_1(t).
	\end{equation*}
	It can be verified that
	\bex
	\rho(t)=(L^{-1} - L_h^{-1}) Ly(t).
	\eex
    Using the non-negative definite of the operator $L_h^{-1}$ as well as the standard error analysis of the finite element approximation to the parabolic equation \cite[Lemma 3.51]{LORD2014} gives: 
	for almost every $\omega\in\Omega$, 
	\begin{equation*}\label{important1}
		\| e_1(t)\|_{_{L^2(D)}}
		\leq c\sup_{0\le \tau\le t}\Big(\|\rho(\tau)\|_{_{L^2(D)}}+\tau\Big\|\frac{\partial \rho(\tau)}{\partial \tau}\Big\|_{{L^2(D)}}\Big).
	\end{equation*}
The terms in the right-hand side can be bounded by:
	\begin{equation*}\label{important2}
		\|\rho(\tau)\|_{_{L^2(D)}}=\|(L^{-1}-L_h^{-1})Ly(\tau)\|_{_{L^2(D)}}\leq ch^2\| Le^{-L\tau}u_0\|_{_{L^2(D)}}\leq ch^2,
	\end{equation*}
	\begin{equation*}\label{important3}
		\tau\Big\|\frac{\partial\rho(\tau)}{\partial \tau}\Big\|_{{L^2(D)}}=\tau\| (L^{-1}-L_h^{-1})L^2y(\tau)\|_{_{L^2(D)}}
		\leq c\tau h^2\| L^2e^{-L\tau}u_0\|_{_{L^2(D)}}
		\leq ch^2\|Lu_0\|_{_{L^2(D)}}\leq ch^2.
	\end{equation*}
	Thus
	\begin{equation}\label{for4.22}
		\|e_1^n\|_{_{L^2(D)}} = \| e_1(t_n)\|_{_{L^2(D)}}\leq ch^2,\ \ \ n=0,1,\dots, N.
	\end{equation}
	We now turn to estimate the temporal discretization error $e^n_2$. A direct calculation gives
	\begin{equation*}
		\begin{aligned}
			\| e^n_2 \|_{_{L^2(D)}}&=\big\|\big(e^{-n\Delta tL_h}-(I+\Delta tL_h)^{-n}\big)L_h^{-1}L_h
			\mathcal{P}_{h}u_0\big\|_{{L^2(D)}}\\
			&\leq\big\|\big(e^{-n\Delta tL_h}-(I+\Delta tL_h)^{-n}\big)L_h^{-1}\big\|_{{\mathcal{L}({L^2(D)})}}
			\| L_hP_hu_0\|_{_{L^2(D)}}.
		\end{aligned}
	\end{equation*}
	Noticing that the operator $L_h$ is symmetric, and 
	the $\L(L^2(D))$-norm of the operator $\big(e^{-n\Delta tL_h}-(I+\Delta tL_h)^{-n}\big)L_h^{-1}$ is equal to its spectral radius, i.e.,
	\bex
	\sup_{j=1,...,K}\big|\big(e^{-n\Delta t\lambda^h_{j}}-(1+\Delta t\lambda^h_{j})^{-n}\big)
	/\lambda^h_{j}\big|, 
	\eex
	where $\lambda^h_{j}>0$, $j=1,...,K$, are the eigenvalues of $L_h$. 
	Note that $|(e^{-nx}-(1+x)^{-n})/x|$ is bounded for $x>0$, therefore taking 
	$x=\lambda_{j}^h\Delta t$ gives
	\bex
	&&\sup_{j=1,...,K}\big|\big(e^{-n\Delta t\lambda^h_{j}}-(1+\Delta t\lambda^h_{j})^{-n}\big)
	/\lambda^h_{j}\big| \le c\Dt. 
	\eex
	This proves
	\begin{equation}\label{2bound}
		\| e^n_2\|_{_{L^2(D)}}\leq c\Delta t,\ \  n=0,1,\dots,N.
	\end{equation}
	Combining \eqref{errorsplit}, \eqref{for4.22}, and \eqref{2bound} gives
	$$\| \theta_1\|_{_{L^2(D)}}\leq c(\Delta t+h^2).$$
	The above estimate holds for almost all $\omega\in \Omega$. Therefore
	$$\|\theta_1\|_{_{L^2(\Omega,L^2(D))}}\leq c(\Delta t+h^2).$$ 
\end{proof} 
\begin{remark} If $u_0\in L^2(\Omega,L^2(D))$. Then we have only \cite{KRUSE2014,LORD2014}: 
for almost every $\omega\in\Omega$, 
	\be\label{operatorbound1}
	\|\theta_1\|_{_{L^2(D)}} = \|T_n u_0\|_{_{L^2(D)}} \leq c\|u_0\|_{_{L^2(D)}} \frac{\Delta t+h^2}{t_n},\ \ n=1,\dots,N.
	\ee
\end{remark}

We next derive the error estimate for the term $\theta_2$, which is based on the standard error analysis 
for the deterministic semilinear evolution equation, the semigroup property, 
and the temporal regularity of the mild solution. 

\begin{lemma}[Error estimate of $\theta_2$]\label{lem64}
	Suppose $u_0\in L^2(\Omega,\mathcal{D}(L))$. Then 
there exists 
a constant $c$ independent of $h$ and $\Dt$ (but depends on $\|u_0\|_{L^2(\Omega,\mathcal{D}(L))}$), such that
	\begin{equation*}\label{lemma5.4re}
		\|\theta_2\|_{_{L^2(\Omega,L^2(D))}}\leq c\Big[\Delta t^{\frac{1}{2}}+(\Delta t+h^2)\ln(\Delta t^{-1})+\sum_{k=0}^{n-1}\| u(t_k)-u^k_{h}\|_{_{L^2(\Omega,L^2(D))}}\Delta t\Big], \ \ \ n=1,\dots,N, 
	\end{equation*}
where $\theta_2$ is given by \eqref{errore2}.
\end{lemma}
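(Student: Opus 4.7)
My plan is to decompose $\theta_2$ into four telescoping pieces that separate the semigroup discretization error, the time regularity of the drift, the spatio-temporal operator error, and the pointwise error term that will later be absorbed by a discrete Gronwall inequality. Specifically, using $\Delta t\, S_{h,\Delta t}^{n-k}\mathcal{P}_h f(u_h^k)=\int_{t_k}^{t_{k+1}} S_{h,\Delta t}^{n-k}\mathcal{P}_h f(u_h^k)\,d\tau$, I would write $\theta_2=\theta_2^A+\theta_2^B+\theta_2^C+\theta_2^D$, where
\begin{align*}
\theta_2^A&:=\sum_{k=0}^{n-1}\int_{t_k}^{t_{k+1}}\bigl[S(t_n-\tau)-S(t_n-t_k)\bigr]f(u(\tau))\,d\tau,\\
\theta_2^B&:=\sum_{k=0}^{n-1}\int_{t_k}^{t_{k+1}}S(t_n-t_k)\bigl[f(u(\tau))-f(u(t_k))\bigr]\,d\tau,\\
\theta_2^C&:=\sum_{k=0}^{n-1}\int_{t_k}^{t_{k+1}}\bigl[S(t_n-t_k)-S_{h,\Delta t}^{n-k}\mathcal{P}_h\bigr]f(u(t_k))\,d\tau,\\
\theta_2^D&:=\sum_{k=0}^{n-1}\int_{t_k}^{t_{k+1}}S_{h,\Delta t}^{n-k}\mathcal{P}_h\bigl[f(u(t_k))-f(u_h^k)\bigr]\,d\tau.
\end{align*}

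For $\theta_2^A$ I would use the semigroup identity $S(t_n-t_k)-S(t_n-\tau)=S(t_n-\tau)(S(\tau-t_k)-I)$ and insert $L^{1/2}L^{-1/2}$ to exploit \eqref{posalpha} and \eqref{negalpha} with $\alpha=1/2$, getting a pointwise (in $\omega$) bound of $c(t_n-\tau)^{-1/2}(\tau-t_k)^{1/2}\|f(u(\tau))\|_{L^2(D)}$. Taking the $L^2(\Omega,L^2(D))$ norm, using \eqref{linarofF} together with the stability bound \eqref{theresult}, and estimating $\sum_{k=0}^{n-1}\int_{t_k}^{t_{k+1}}(t_n-\tau)^{-1/2}(\tau-t_k)^{1/2}d\tau\le c\Delta t\sum_{j=1}^{n}j^{-1/2}\le c\Delta t^{1/2}$ yields $\|\theta_2^A\|\le c\Delta t^{1/2}$. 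For $\theta_2^B$ I would use $\|S(t_n-t_k)\|_{\mathcal L(L^2)}\le 1$, the Lipschitz assumption \eqref{lipofF}, and the temporal Hölder regularity \eqref{timereg} to get $\int_{t_k}^{t_{k+1}}(\tau-t_k)^{1/2}d\tau\le c\Delta t^{3/2}$ per time step; summing over $k$ gives $c T\Delta t^{1/2}$.

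The main obstacle is $\theta_2^C$, which must produce the logarithmic factor. Here I would apply the remark after Lemma \ref{lem63}, namely $\|(S(t_n-t_k)-S_{h,\Delta t}^{n-k}\mathcal{P}_h)v\|_{L^2(D)}\le c\|v\|_{L^2(D)}\frac{\Delta t+h^2}{t_n-t_k}$, to $v=f(u(t_k))$; integrating over $[t_k,t_{k+1}]$ (length $\Delta t$) and noting $t_n-t_k=(n-k)\Delta t$ leaves $\frac{\Delta t+h^2}{n-k}$. Summing produces the harmonic sum $\sum_{j=1}^{n}j^{-1}\le 1+\ln n\le c\ln(\Delta t^{-1})$, using $n\le T/\Delta t$, which accounts for the $(\Delta t+h^2)\ln(\Delta t^{-1})$ contribution; along the way I invoke \eqref{linarofF} together with \eqref{theresult} to control $\|f(u(t_k))\|_{L^2(\Omega,L^2(D))}$.

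Finally, for $\theta_2^D$, the boundedness $\|(I+\Delta t L_h)^{-1}\|_{\mathcal L(L^2(D))}\le 1$ (since $L_h$ is symmetric non-negative definite, as noted in the text) together with $\|\mathcal{P}_h\|\le 1$ and \eqref{lipofF} gives $\|\theta_2^D\|\le c\Delta t\sum_{k=0}^{n-1}\|u(t_k)-u_h^k\|_{L^2(\Omega,L^2(D))}$, producing exactly the third term in the claimed bound. Combining the four estimates by the triangle inequality yields the lemma. The key delicacy throughout is that all semigroup estimates hold only almost surely in $\omega$, so the constants must not depend on $\omega$; the hypotheses \eqref{conditionofa} and the $\omega$-uniform constants built into \eqref{posalpha}–\eqref{negalpha} guarantee this, allowing the $L^2(\Omega,\cdot)$ norm to be taken at the end without difficulty.
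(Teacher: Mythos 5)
Your proof is correct and follows essentially the same route as the paper: the identical four-term telescoping decomposition (with the two middle comparisons merely swapped in order), the deterministic error bound \eqref{operatorbound1} for the operator difference giving the logarithmic harmonic sum, the H\"older regularity \eqref{timereg} with \eqref{lipofF} for the time-increment term, and stability of $S_{h,\Delta t}^{n-k}\mathcal{P}_h$ for the Gronwall term. The only substantive deviation is that you estimate the first piece via $L^{1/2}$-smoothing to get $c\Delta t^{1/2}$, whereas the paper uses $\alpha=1$ to get the sharper $c\Delta t(1+\ln(\Delta t^{-1}))$; both are absorbed by the stated bound, so the argument goes through.
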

\begin{proof}
	The term to be bounded can be expressed by
	\bex
	\theta_2=\sum_{k=0}^{n-1}\int_{t_k}^{t_{k+1}}\big(S(t_n-\tau)f(u(\tau))-S_{h,\Delta t}^{n-k}\mathcal{P}_{h}f(u^k_{h})\big)d\tau,
	\eex
which can be decomposed into
	\bex
	\theta_2= \theta_2^1 + \theta_2^2 + \theta_2^3 + \theta_2^4
	\eex
	with
	\bex
	&& \theta^1_2:=\sum_{k=0}^{n-1}\int_{t_k}^{t_{k+1}} \big(S(t_n-\tau)-S(t_n-t_k)\big)f(u(\tau)) d\tau,\\
	&& \theta^2_2:=\sum_{k=0}^{n-1}\int_{t_k}^{t_{k+1}} \big(S(t_n-t_k)-S_{h,\Delta t}^{n-k}\mathcal{P}_{h}\big)f(u(\tau))d\tau,\\
	&& \theta^3_2:=\sum_{k=0}^{n-1}\int_{t_k}^{t_{k+1}} S_{h,\Delta t}^{n-k}\mathcal{P}_{h}\big(f(u(\tau))-f(u(t_k))\big)d\tau,\\
	&& \theta^4_2:=\sum_{k=0}^{n-1}\int_{t_k}^{t_{k+1}} S_{h,\Delta t}^{n-k}\mathcal{P}_{h}\big(f(u(t_k))-f(u^k_{h})\big)d\tau.
	\eex
For the part $\theta^1_2$, it follows from the norm definition \eqref{L2Omega}:
\begin{equation*}
	\begin{aligned}\label{partI}
		\|\theta^1_2\|_{_{L^2(\Omega,L^2(D))}}
		\leq\sum_{k=0}^{n-1}\int_{t_k}^{t_{k+1}}\mathbb{E}\Big[\| S(t_n-\tau)-S(t_n-t_k)\|_{_{\mathcal{L}({L^2(D)})}}^2\| f(u(\tau))\|_{_{L^2(D)}}^2\Big]^{\frac{1}{2}}d\tau.
	\end{aligned}
\end{equation*}
According to \eqref{posalpha} and \eqref{negalpha}, the operator norm $\| S(t_n-\tau)-S(t_n-t_k)\|_{_{\mathcal{L}({L^2(D)})}}$ is bounded
$\mathbb{P}$-a.s. by:
\bex
& \| S(t_n-\tau)-S(t_n-t_{n-1})\|_{_{\mathcal{L}({L^2(D)})}} &\le c,
\\
& \| S(t_n-\tau)-S(t_n-t_k)\|_{_{\mathcal{L}({L^2(D)})}}
		&\leq\| LS(t_n-\tau)\|_{_{\mathcal{L}({L^2(D)})}}\| L^{-1}(I-S(\tau-t_k))\|_{_{\mathcal{L}({L^2(D)})}}\\
		&&\leq c\frac{\tau-t_k}{t_n-\tau}, \ \ \tau\in (t_k, t_{k+1}),\ \ k=0,\dots,n-2.
\eex
We further use 
\eqref{linarofF} and \eqref{theresult} to derive
\begin{equation*}
	\begin{aligned}
\|\theta^1_2\|_{{L^2(\Omega,L^2(D))}}
 &\leq c\Big(\Delta t+\sum_{k=0}^{n-2}\int_{t_k}^{t_{k+1}}\frac{\Delta t}{t_n-t_{k+1}}d\tau\Big)\leq c\Dt\Big(1+\sum_{k=1}^{n}\frac{1}{k}\Big)\\
&\leq c\Dt(1+\ln n)\leq c\Dt(1+\ln(\Dt^{-1})).
	\end{aligned}
\end{equation*}
The estimate of $\theta^2_2$ follows from \eqref{Tn}, \eqref{operatorbound1}, \eqref{linarofF}, and \eqref{theresult}:
	\begin{equation*}
		\begin{aligned}
			\|\theta^2_2\|_{_{L^2(\Omega,L^2(D))}}&\leq\sum_{k=0}^{n-1}\int_{t_k}^{t_{k+1}}\| T_{n-k}f(u(\tau))\|_{_{L^2(\Omega,L^2(D))}}d\tau\\
			&\leq\sum_{k=0}^{n-1}\int_{t_k}^{t_{k+1}}\mathbb{E}\big[\| T_{n-k}\|_{_{\L{(L^2(D))}}}^2\| f(u(\tau))\|_{_{L^2(D)}}^2\big]^{\frac{1}{2}}d\tau
			\\&\leq c\sum_{k=0}^{n-1}\int_{t_k}^{t_{k+1}}\frac{\Delta t+h^2}{t_{n-k}}d\tau
			= c(\Delta t+h^2)\sum_{k=0}^{n-1}\frac{1}{n-k} \le c(\Delta t+h^2)\ln(\Dt^{-1}).
		\end{aligned}
	\end{equation*}
	For the part $\theta^3_2$, by $\| S_{h,\Delta t}^{n-k}\|_{_{\mathcal{L}({L^2(D)})}}\leq1$ ($\mathbb{P}\mbox{-a.s.}$), $\| \mathcal{P}_{h}\|_{_{\mathcal{L}({L^2(D)})}}\leq1$, \eqref{lipofF}, and \eqref{timereg}, we have
	\begin{equation*}\label{5.97}
		\begin{aligned}
			\|\theta^3_2\|_{_{L^2(\Omega,L^2(D))}}&\leq\sum_{k=0}^{n-1}\int_{t_k}^{t_{k+1}}\big\| f(u(\tau))-f(u(t_k))\big\|_{{L^2(\Omega,L^2(D))}}d\tau
			\\&\leq c\sum_{k=0}^{n-1}\int_{t_k}^{t_{k+1}}\| u(\tau)-u(t_k)\|_{_{L^2(\Omega,L^2(D))}}d\tau
			\\&\leq c\sum_{k=0}^{n-1}\int_{t_k}^{t_{k+1}}(\tau-t_k)^{\frac{1}{2}}d\tau
			\leq c\Delta t^{\frac{1}{2}}.
		\end{aligned}
	\end{equation*}
The part $\theta^4_2$ can be estimated similarly: 
	\begin{equation*}\label{5.98}
		\begin{aligned}
			\| \theta^4_2\|_{_{L^2(\Omega,L^2(D))}}
			&\leq \sum_{k=0}^{n-1}\int_{t_k}^{t_{k+1}}\| f(u(t_k))-f(u^k_{h})\|_{_{L^2(\Omega,L^2(D))}}d\tau
			\\&\leq c\sum_{k=0}^{n-1}\int_{t_k}^{t_{k+1}}\| u(t_k)-u^k_{h}\|_{_{L^2(\Omega,L^2(D))}}d\tau
			\\&= c\sum_{k=0}^{n-1}\| u(t_k)-u^k_{h}\|_{_{L^2(\Omega,L^2(D))}}\Dt.
		\end{aligned}
	\end{equation*}
Finally, we conclude by combining all above estimates with the triangle inequality. 
\end{proof} 
In order to estimate the error contribution term $\theta_3$, we need to derive an estimate related to the nonlinear term $G$.
\begin{lemma}\label{lem61}	
	Suppose $u_0\in L^2(\Omega,\mathcal{D}(L))$, and the eigenvalues of $Q$ satisfy $q_j=\mathcal{O}(j^{-(2\gamma+1+{\epsilon})})$ for some $\gamma\ge0$ and ${\epsilon}>0$. Then it holds: for $0\leq \tau_1\leq \tau_2\leq T$,
\be
	\big\| \mathcal{P}_{h}\big(G(u(\tau_2))-G(u(\tau_1))\mathcal{P}^w_{J}\big)\big\|{_{_{L^2(\Omega,\L_Q)}}}\leq c(|\tau_2-\tau_1|^{\frac{1}{2}}+J^{-\gamma}).\label{f618}
\ee
\end{lemma}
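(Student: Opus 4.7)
The plan is to decompose the quantity inside the $L^2(\Omega,\L_Q)$-norm via $\pm G(u(\tau_1))$, namely
$$G(u(\tau_2)) - G(u(\tau_1))\mathcal{P}^w_J = \bigl[G(u(\tau_2)) - G(u(\tau_1))\bigr] + G(u(\tau_1))\bigl[I - \mathcal{P}^w_J\bigr],$$
and to estimate the two summands separately, absorbing the outer $\mathcal{P}_{h}$ via its contractivity on $L^2(D)$.

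For the first (temporal-increment) summand, the Lipschitz bound \eqref{lipofG} at $s=0$ gives pathwise $\|G(u(\tau_2))-G(u(\tau_1))\|_{\L_Q}\le c\|u(\tau_2)-u(\tau_1)\|_{L^2(D)}$; taking the $L^2(\Omega)$-expectation and invoking the temporal H\"older regularity \eqref{timereg} of the mild solution produces the $c|\tau_2-\tau_1|^{1/2}$ contribution.

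For the second (spectral-truncation) summand, I would expand in the $Q$-eigenbasis. Since $\mathcal{P}^w_J$ commutes with $Q^{1/2}$ and acts as the identity on $\phi_j$ for $j\le J$ and as zero for $j>J$, one obtains
$$\|G(u(\tau_1))(I-\mathcal{P}^w_J)\|_{\L_Q}^2 = \sum_{j>J} \|G(u(\tau_1))Q^{1/2}\phi_j\|_{L^2(D)}^2 = \sum_{j>J} q_j\,\|G(u(\tau_1))\phi_j\|_{L^2(D)}^2.$$
The idea is then to bound $\|G(u(\tau_1))\phi_j\|_{L^2(D)}$ uniformly in $j$ in the $L^2(\Omega)$ sense, using assumption \eqref{linearofG} together with the stability estimate \eqref{theresult} for $u(\tau_1)$, and to apply the tail-sum bound
$$\sum_{j>J} q_j \le c\int_{J}^{\infty} x^{-(2\gamma+1+\epsilon)}\,dx \le c J^{-(2\gamma+\epsilon)} \le c J^{-2\gamma}$$
coming from the prescribed eigenvalue decay $q_j=\mathcal{O}(j^{-(2\gamma+1+\epsilon)})$ via integral comparison. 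Taking square roots yields the $cJ^{-\gamma}$ contribution, and the triangle inequality concludes.

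The hard part will be the uniform-in-$j$ control of $\|G(u(\tau_1))\phi_j\|_{L^2(D)}$: assumption \eqref{linearofG} only controls the $q_j$-weighted sum, so one needs extra input to extract per-eigenfunction bounds. The cleanest route is to upgrade \eqref{linearofG} to an operator-norm bound $\|G(v)\|_{\mathcal{L}(L^2(D))}\le c(1+\|v\|_{L^2(D)})$ (available in the concrete instances covered by the paper, e.g., Nemytskii-type operators) and then invoke the Hilbert--Schmidt submultiplicative inequality $\|G(v)\cdot Q^{1/2}(I-\mathcal{P}^w_J)\|_{\mathrm{HS}(L^2(D))}\le \|G(v)\|_{\mathcal{L}(L^2(D))}\cdot\|Q^{1/2}(I-\mathcal{P}^w_J)\|_{\mathrm{HS}(L^2(D))}$ directly, bypassing the per-$j$ bound and delivering the tail factor $(\sum_{j>J}q_j)^{1/2}=\mathcal{O}(J^{-\gamma})$ in one step.
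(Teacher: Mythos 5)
Your proposal is correct and follows essentially the same route as the paper: the identical splitting via $\pm G(u(\tau_1))$, the Lipschitz bound \eqref{lipofG} combined with the temporal H\"older regularity \eqref{timereg} for the increment term, and the tail sum $\bigl(\sum_{j>J}q_j\bigr)^{1/2}\le cJ^{-\gamma}$ for the truncation term. The ``hard part'' you flag is real but is present in the paper as well: its displayed chain factors out $\| G(u(\tau_1))\|_{_{\L(L^2(D))}}$, i.e.\ it silently invokes exactly the operator-norm bound $\| G(v)\|_{_{\L(L^2(D))}}\le c(1+\| v\|_{_{L^2(D)}})$ that you propose as the fix, while citing only \eqref{linearofG} and \eqref{theresult}.
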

\begin{proof}
 Using the triangle inequality:
	\begin{equation*}
		\begin{aligned}\label{f4.11}
			\big\| \mathcal{P}_{h}\big(G(u(\tau_2))-G(u(\tau_1))\mathcal{P}^w_J\big)\big\|_{_{L^2(\Omega,\L_Q)}}&\leq\big\|\mathcal{P}_{h}\big(G(u(\tau_2))-G(u(\tau_1))\big)\big\|_{_{L^2(\Omega,\L_Q)}}\\
			&\ \ \ +\big\| \mathcal{P}_{h}\big(G(u(\tau_1))-G(u(\tau_1))\mathcal{P}^w_{J}\big)\big\|_{_{L^2(\Omega,\L_Q)}},
		\end{aligned}
	\end{equation*}
	we are led to estimate the two terms in the right-hand side.
	First, employing \eqref{lipofG} and \eqref{timereg} gives:
	\begin{equation*}\label{in6}
		\begin{aligned}
			\big\|\mathcal{P}_{h}\big(G(u(\tau_2))-G(u(\tau_1))\big)\big\|_{_{L^2(\Omega,\L_Q)}}
			&\leq\big\| G(u(\tau_2))-G(u(\tau_1))\big\|_{_{L^2(\Omega,\L_Q)}}\\
			&\leq
			c\| u(\tau_2)-u(\tau_1)\|_{_{L^2(\Omega,L^2(D))}}\leq c|\tau_2-\tau_1|^{\frac{1}{2}}.
		\end{aligned}
	\end{equation*}
	Then under the assumptions \eqref{linearofG} and \eqref{theresult}, we have
	\begin{equation*}
		\begin{aligned}\label{f4.13}
			\big\| \mathcal{P}_{h}\big(G(u(\tau_1))-G(u(\tau_1))\mathcal{P}^w_{J}\big)\big\|_{_{L^2(\Omega,\L_Q)}}
			&\leq\mathbb{E}\Big[\sum_{j=1}^{\infty}\big\| G(u(\tau_1))(I-\mathcal{P}^w_{J})Q^{\frac{1}{2}}\phi_j\big\|_{_{L^2(D)}}^2\Big]^\frac{1}{2}\\
			&\leq c \mathbb{E}\Big[\big\| G(u(\tau_1))\big\|_{_{\L{(L^2(D))}}}^2
			\sum_{j=1}^{\infty}\big\|(I-\mathcal{P}^w_{J})q_j^{\frac{1}{2}}\phi_j\big\|_{_{L^2(D)}}^2\Big]^\frac{1}{2}\\
			&\leq c(1+\| u_0\|_{_{L^2(\Omega,L^2(D))}})
			\Big(\sum_{j=J+1}^{\infty}q_j\Big)^{1/2}\\
			&\le cJ^{-\gamma}.
		\end{aligned}
	\end{equation*}
	This proves \eqref{f618}.
\end{proof}

\begin{lemma}[Error estimate of $\theta_3$]\label{lem65}
Under the assumptions of Lemma \ref{lem61}, further assume $\Dt=O(h^2)=O(J^{-\gamma})$.  
	Then there exists a constant $c$ independent of $\Dt$ and $h$, such that 
	\begin{equation}\label{iii5}
		\| \theta_3\|^2_{_{L^2(\Omega,L^2(D))}}\leq c\Big[(\Delta t^{\frac{1}{2}}+h^2)^2+\sum_{k=0}^{n-1}\| u(t_k)-u^k_{h}\|^2_{_{L^2(\Omega,L^2(D))}}{\Delta t}\Big],\ \ n=1,\dots,N,
	\end{equation}
where $\theta_3$ is given by \eqref{errore3}.
\end{lemma}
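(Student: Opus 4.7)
The plan is to apply It\^o's isometry to convert $\|\theta_3\|_{L^2(\Omega,L^2(D))}^2$ into a deterministic time integral of $\mathbb{E}\|\cdot\|_{\L_Q}^2$, and then to handle the integrand via a four-term telescoping decomposition that mirrors the one used for $\theta_2$ in Lemma~\ref{lem64}:
\[
\|\theta_3\|_{L^2(\Omega,L^2(D))}^2=\sum_{k=0}^{n-1}\int_{t_k}^{t_{k+1}}\mathbb{E}\bigl\|S(t_n-\tau)G(u(\tau))-S_{h,\Dt}^{n-k}\mathcal{P}_h G(u_h^k)\mathcal{P}_J^w\bigr\|_{\L_Q}^2\,d\tau.
\]
I would split the integrand as $A_1+A_2+A_3+A_4$ with
\begin{align*}
A_1 &:= [S(t_n-\tau)-S(t_n-t_k)]\,G(u(\tau)),\\
A_2 &:= [S(t_n-t_k)-S_{h,\Dt}^{n-k}\mathcal{P}_h]\,G(u(\tau)),\\
A_3 &:= S_{h,\Dt}^{n-k}\mathcal{P}_h\bigl[G(u(\tau))-G(u(t_k))\mathcal{P}_J^w\bigr],\\
A_4 &:= S_{h,\Dt}^{n-k}\mathcal{P}_h\bigl[G(u(t_k))-G(u_h^k)\bigr]\mathcal{P}_J^w,
\end{align*}
so that each piece isolates a single source of error: freezing the time argument of the semigroup, discretizing the semigroup, freezing and spectrally truncating the integrand, and finally replacing the exact solution by the numerical one.

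For $A_1$, the semigroup identity $S(t_n-\tau)-S(t_n-t_k)=S(t_n-\tau)(I-S(\tau-t_k))$ combined with \eqref{posalpha}, \eqref{negalpha} (at $\alpha=1/2$), the $L^{1/2}$-growth bound $\|L^{1/2}G(u(\tau))\|_{\L_Q}\le c(1+\|u(\tau)\|_{L^2(D)})$ from \eqref{linearofG} and the stability estimate \eqref{theresult} yield $\mathbb{E}\|A_1\|_{\L_Q}^2\le c(\tau-t_k)$, so that the summed contribution is bounded by $c\Dt$. For $A_2$, interpolating Lemma~\ref{lem63} against the rough-data estimate \eqref{operatorbound1} produces the smoothed operator bound $\|T_{n-k}L^{-1/2}\|_{\L(L^2(D))}\le c(\Dt+h^2)\,t_{n-k}^{-1/2}$ almost surely; pairing again with \eqref{linearofG} at $s=1/2$ and summing in $k$ delivers a contribution of order $c(\Dt+h^2)^2\sum_{j=1}^n 1/j$, which under the scaling assumption $\Dt=O(h^2)$ is absorbed into $(\Dt^{1/2}+h^2)^2$ up to a harmless logarithm. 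For $A_3$, using $\|S_{h,\Dt}^{n-k}\|_{\L(L^2(D))}\le 1$ almost surely and then invoking Lemma~\ref{lem61} with $\tau_2=\tau$, $\tau_1=t_k$ gives $\mathbb{E}\|A_3\|_{\L_Q}^2\le c((\tau-t_k)+J^{-2\gamma})$, whose sum is $c(\Dt+J^{-2\gamma})$ and is absorbed thanks to $\Dt=O(J^{-\gamma})$. Finally, the uniform Lipschitz bound \eqref{f617} applied to $A_4$ yields precisely the Gr\"onwall-ready remainder $c\sum_{k=0}^{n-1}\|u(t_k)-u_h^k\|_{L^2(\Omega,L^2(D))}^2\,\Dt$.

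Aggregating the four contributions through the elementary inequality $\|\sum A_i\|^2\le 4\sum\|A_i\|^2$ then produces \eqref{iii5}. The delicate step is the estimate for $A_2$: the raw operator bound on $T_{n-k}$ coming from \eqref{operatorbound1} alone is insufficient, since after squaring and integrating the factor $1/t_{n-k}^2$ diverges; the remedy is to pair the smoothed variant $\|T_{n-k}L^{-1/2}\|_{\L(L^2(D))}\le c(\Dt+h^2)/t_{n-k}^{1/2}$ with the $L^{1/2}$-regularity of $G$ in \eqref{linearofG}, which is precisely why the range $s\in[0,1/2]$ is imposed there, and why the lemma's hypotheses $\Dt=O(h^2)=O(J^{-\gamma})$ are needed to conceal the residual logarithmic and $J^{-2\gamma}$ factors inside $(\Dt^{1/2}+h^2)^2$.
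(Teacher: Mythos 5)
Your decomposition of $\theta_3$ via the It\^o isometry into the four pieces $A_1,\dots,A_4$ is exactly the paper's splitting ($X_1,\dots,X_4$), and your treatment of $A_3$ (via Lemma \ref{lem61} and $\Dt=O(J^{-\gamma})$) and $A_4$ (via \eqref{f617}, producing the Gr\"onwall remainder) coincides with the paper's. Two local points differ. For $A_1$ your route is a valid, in fact slightly cleaner, variant: you put the full half power of smoothing on $I-S(\tau-t_k)$ via \eqref{negalpha} with $\alpha=\tfrac12$ and use only $\|S(t_n-\tau)\|_{\L(L^2(D))}\le 1$, which gives $\mathbb{E}\|A_1\|_{\L_Q}^2\le c(\tau-t_k)$ uniformly and avoids the paper's need to treat the last subinterval separately (the paper instead uses $\|L^{1/2}S(t_n-\tau)\|\le c(t_n-\tau)^{-1/2}$ and $\|L^{-1}(I-S(\tau-t_k))\|\le c(\tau-t_k)$, obtaining $c(\Dt+\Dt^2\ln(\Dt^{-1}))$); both rely on \eqref{linearofG} with $s=\tfrac12$.

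The one step I would push back on is $A_2$. Your claim that the raw bound \eqref{operatorbound1} is ``insufficient, since after squaring and integrating the factor $1/t_{n-k}^2$ diverges'' is false: one gets
\begin{equation*}
\sum_{k=0}^{n-1}\int_{t_k}^{t_{k+1}}\frac{(\Dt+h^2)^2}{t_{n-k}^2}\,d\tau
=\frac{(\Dt+h^2)^2}{\Dt}\sum_{j=1}^{n}\frac{1}{j^2}\le c\,\frac{(\Dt+h^2)^2}{\Dt},
\end{equation*}
since $\sum_j j^{-2}$ converges, and under the scaling $\Dt=O(h^2)$ this is $\le c\Dt$ with no logarithm. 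This is precisely what the paper does, so no smoothing of $T_{n-k}$ is needed at all. Your substitute bound $\|T_{n-k}L^{-1/2}\|_{\L(L^2(D))}\le c(\Dt+h^2)\,t_{n-k}^{-1/2}$, obtained by ``interpolating'' Lemma \ref{lem63} against \eqref{operatorbound1}, is plausible and standard in the deterministic FEM literature, but it is nowhere established in the paper (one would need to justify the interpolation for the random, sample-dependent operator $L$), and it yields the weaker estimate $(\Dt+h^2)^2\ln(\Dt^{-1})$. So your argument reaches the stated conclusion, but the ``delicate step'' you identify is a misdiagnosis: the direct route is both simpler and sharper, and your replacement rests on an unproven auxiliary estimate.
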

\begin{proof}
Split $\theta_3$ as $\theta_3=\sum_{i=1}^{4}\theta_3^i$, where 
\bex
\theta_{3}^i:=\sum_{k=0}^{n-1}\int_{t_k}^{t_{k+1}}{X_i}dW(\tau)
\eex
with
\begin{align}
	&{X_1}:=\big(S(t_n-\tau)-S(t_n-t_k)\big)G(u(\tau)),\quad {X_2}:=\big(S(t_n-t_k)-S_{h,\Delta t}^{n-k}\mathcal{P}_{h}\big)G(u(\tau)),\notag\\
	&{X_3}:=S_{h,\Delta t}^{n-k}\mathcal{P}_{h}\big(G(u(\tau))-G(u(t_k))\mathcal{P}^w_{J}\big),\ {X_4}:=S_{h,\Delta t}^{n-k}\mathcal{P}_{h}\big(G(u(t_k))\mathcal{P}^w_{J}-G(u^k_{h})\mathcal{P}^w_{J}\big)\notag.
\end{align}
For the part $\theta_3^{1}$, it follows from the It\^o isometry:
\begin{equation*}
	\begin{aligned}\label{or1}
		\| \theta_3^{1}\|_{_{L^2(\Omega,L^2(D))}}^2&=\sum_{k=0}^{n-1}\int_{t_k}^{t_{k+1}}\mathbb{E}[\|{X_1}\|_{_{\L_Q}}^2]d\tau\\
		&\leq\int_{t_{n-1}}^{t_n}\mathbb{E}\Big[\big\|S(t_n-\tau)-S(t_n-t_{n-1})\big\|^2_{_{\L(L^2(D))}}\big\| G(u(\tau))\big\|_{_{\L_Q}}^2\Big]d\tau\\
		&\ \ \ +\sum_{k=0}^{n-2}\int_{t_k}^{t_{k+1}}\mathbb{E}\Big[\big\|\big(S(t_n-\tau) -S(t_n-t_k)\big)L^{-\frac{1}{2}}\big\|_{_{\mathcal{L}({L^2(D)})}}^2\| L^{\frac{1}{2}}G(u(\tau))\|_{_{\L_Q}}^2\Big]d\tau.
	\end{aligned}
\end{equation*}
We are led to estimate the two terms on the right-hand side of the inequality. First using $\big\|S(t_n-\tau)-S(t_n-t_{n-1})\big\|^2_{_{\L(L^2(D))}}\leq c$ ($\mathbb{P}$-a.s.), \eqref{linearofG}, and \eqref{theresult} yields
\bex
\int_{t_{n-1}}^{t_n}\mathbb{E}\Big[\big\|S(t_n-\tau)-S(t_n-t_{n-1})\big\|^2_{_{\L(L^2(D))}}\big\| G(u(\tau))\big\|_{_{\L_Q}}^2\Big]d\tau\leq c\Dt.
\eex
Then employing \eqref{posalpha} and \eqref{negalpha} gives
\begin{equation*}
	\begin{aligned}
 \big\|\big(S(t_n-\tau) -S(t_n-t_k)\big)L^{-\frac{1}{2}}\big\|&_{_{\mathcal{L}({L^2(D)})}}^2=\big\| L^{\frac{1}{2}}S(t_n-\tau) L^{-1}\big(I-S(\tau-t_k)\big)\big\|_{_{\mathcal{L}({L^2(D)})}}^2\\
&\leq \big\| L^{\frac{1}{2}}S(t_n-\tau)\big\|_{_{\mathcal{L}({L^2(D)})}}^2\big\| L^{-1}\big(I-S(\tau-t_k)\big)\big\|_{_{\mathcal{L}({L^2(D)})}}^2\\
&\leq c\frac{(\tau-t_k)^{2}}{t_n-\tau}.
\end{aligned}
\end{equation*}
 Making use of \eqref{linearofG}, \eqref{theresult} gives
 \begin{equation*}
 	\begin{aligned}
 \sum_{k=0}^{n-2}\int_{t_k}^{t_{k+1}}\mathbb{E}\Big[\big\|\big(&S(t_n-\tau) -S(t_n-t_k)\big)L^{-\frac{1}{2}}\big\|_{_{\mathcal{L}({L^2(D)})}}^2\| L^{\frac{1}{2}}G(u(\tau))\|_{_{\L_Q}}^2\Big]d\tau\\
 &\leq c\sum_{k=0}^{n-2}\int_{t_k}^{t_{k+1}}\frac{\Delta t^{2}}{t_n-t_{k+1}}d\tau
 \le c \Dt^2\ln(\Dt^{-1}).
 	\end{aligned}
\end{equation*}
Therefore
\begin{equation*}
	\begin{aligned}\label{iii1}
		\| \theta_3^{1}\|_{_{L^2(\Omega,L^2(D))}}^2
	\leq c(\Dt + \Dt^2\ln(\Dt^{-1})).
	\end{aligned}
\end{equation*}
The estimate for $\theta_3^{2}$ follows from It\^o isometry, \eqref{Tn}, and \eqref{operatorbound1}:
\begin{equation*}
\begin{aligned}
\| \theta_3^{2}\|_{_{L^2(\Omega,L^2(D))}}^2
=\sum_{k=0}^{n-1}\int_{t_k}^{t_{k+1}}\mathbb{E}[\|{X_2}\|_{_{\L_Q}}^2]d\tau&=\sum_{k=0}^{n-1}\int_{t_k}^{t_{k+1}}\mathbb{E}[\|{T_{n-k}G(u(\tau))}\|_{_{\L_Q}}^2]d\tau\\
&\leq c\sum_{k=0}^{n-1}\int_{t_k}^{t_{k+1}}\mathbb{E}\Big[\big(\frac{\Delta t+h^2}{t_{n-k}}\big)^2\big\|G(u(\tau))\big\|_{_{\L_Q}}^2\Big]d\tau.
\end{aligned}
\end{equation*}
We further use \eqref{linearofG}, \eqref{theresult}, and $\Dt=O(h^2)$ to get
\begin{equation*}\label{part3ine}
	\begin{aligned}
		\| \theta_3^{2}\|_{_{L^2(\Omega,L^2(D))}}^2\leq c\frac{(\Delta t+h^2)^2}{\Dt}\sum_{k=0}^{n-1}\frac{1}{(n-k)^2}
		\leq c\frac{(\Delta t+h^2)^2}{\Dt}\leq c\Dt.
	\end{aligned}
\end{equation*}
For the part $\theta_3^{3}$, employing $O(h^2)=O(J^{-\gamma})$, $\| S^{n-k}_{h,\Delta t}\|_{_{\mathcal{L}({L^2(D)})}}\leq1$ ($\mathbb{P}$-a.s.), and \eqref{f618} gives
\begin{equation*}\label{iii3}
	\begin{aligned}
		\| \theta_3^{3}\|_{_{L^2(\Omega,L^2(D))}}^2
		=\sum_{k=0}^{n-1}\int_{t_k}^{t_{k+1}}\mathbb{E}[\|{X_3}\|_{_{\L_Q}}^2]d\tau
		&\leq c\sum_{k=0}^{n-1}\int_{t_k}^{t_{k+1}}\big(|\tau-t_k|^{\frac{1}{2}}+J^{-\gamma}\big)^2d\tau
		\\&\leq c\sum_{k=0}^{n-1}\int_{t_k}^{t_{k+1}}\big(|\tau-t_k|^{\frac{1}{2}}+h^2\big)^2d\tau
		\\&\leq c(\Delta t^{\frac{1}{2}}+h^2)^2.
	\end{aligned}
\end{equation*}
The last part $\theta_3^{4}$ can be estimated by using It\^o isometry, $\| S^{n-k}_{h,\Delta t}\|_{_{\mathcal{L}({L^2(D)})}}\leq1$ ($\mathbb{P}$-a.s.), and \eqref{f617}: 
\begin{equation*}\label{iii4}
	\begin{aligned}
		\| \theta_3^{4}\|_{_{L^2(\Omega,L^2(D))}}^2=\sum_{k=0}^{n-1}\int_{t_k}^{t_{k+1}}\mathbb{E}[\|{X_4}\|_{_{\L_Q}}^2]d\tau&=\sum_{k=0}^{n-1}\int_{t_k}^{t_{k+1}}\mathbb{E}\Big[\big\| S_{h,\Delta t}^{n-k}\mathcal{P}_{h}\big(G(u(t_k))\mathcal{P}_J^w-G(u^k_{h})\mathcal{P}_J^w\big)\big\|_{_{\L_Q}}^2\Big]d\tau\\&\leq c\sum_{k=0}^{n-1}\int_{t_k}^{t_{k+1}}\mathbb{E}\big[\| u(t_k)-u^k_{h}\|_{_{L^2(D)}}^2\big]d\tau
		\\&= c\sum_{k=0}^{n-1}\| u(t_k)-u^k_{h}\|^2_{_{L^2(\Omega,L^2(D))}}\Delta t.
	\end{aligned}
\end{equation*}
Finally we combine all above estimates and keep only the leading order to conclude.
\end{proof}

Thanks to the results established in the previous lemmas, we are now in a position to derive 
the full discretization error bound, which is stated in the following theorem.
\begin{theorem} \label{the61}
Let $u$ be the mild solution defined in \eqref{for17}, and $u_h^n$ be the numerical solution of \eqref{f53}.
Then under the assumptions stated in Lemmas \ref{lem63}--\ref{lem65}, 
there exists a constant $c$ independent of $\Dt$ and $h$, such that 
	\begin{equation*}\label{conclusion}
		\| u(t_n)-u^n_{h}\|_{_{L^2(\Omega,L^2(D))}}\leq c\big(\Delta t^{\frac{1}{2}}+(\Delta t+h^2)\ln(\Delta t^{-1})\big), \ \ n=1,\dots,N.
	\end{equation*}
\end{theorem}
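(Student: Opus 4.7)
The plan is to combine the three bounds from Lemmas \ref{lem63}, \ref{lem64}, and \ref{lem65} with a discrete Gronwall argument. The main subtlety is that Lemma \ref{lem64} gives a bound on $\|\theta_2\|_{L^2(\Omega,L^2(D))}$ that is linear in the past errors $\|u(t_k)-u_h^k\|_{L^2(\Omega,L^2(D))}$, whereas Lemma \ref{lem65} naturally produces a bound on $\|\theta_3\|^2_{L^2(\Omega,L^2(D))}$ quadratic in those past errors. To put them on the same footing I will work throughout with the \emph{squared} norm.

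Starting from the decomposition \eqref{errdec} and the elementary inequality $(a+b+c)^2\le 3(a^2+b^2+c^2)$, I obtain
\begin{equation*}
\|u(t_n)-u_h^n\|^2_{_{L^2(\Omega,L^2(D))}} \le 3\bigl(\|\theta_1\|^2_{_{L^2(\Omega,L^2(D))}} + \|\theta_2\|^2_{_{L^2(\Omega,L^2(D))}} + \|\theta_3\|^2_{_{L^2(\Omega,L^2(D))}}\bigr).
\end{equation*}
For $\|\theta_2\|^2$ I apply again $(a+b+c)^2\le 3(a^2+b^2+c^2)$ to the three-term bound of Lemma \ref{lem64}, and then convert the squared sum $\bigl(\sum_{k=0}^{n-1}\|u(t_k)-u_h^k\|_{_{L^2(\Omega,L^2(D))}}\Delta t\bigr)^2$ into $\sum_{k=0}^{n-1}\|u(t_k)-u_h^k\|^2_{_{L^2(\Omega,L^2(D))}}\Delta t$ via the discrete Cauchy-Schwarz inequality (picking up a factor $T=N\Delta t$). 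Plugging in Lemma \ref{lem63} for $\theta_1$ and Lemma \ref{lem65} for $\theta_3$, I arrive at an inequality of the form
\begin{equation*}
\|u(t_n)-u_h^n\|^2_{_{L^2(\Omega,L^2(D))}} \le c\bigl[\Delta t + (\Delta t+h^2)^2\ln^2(\Delta t^{-1})\bigr] + c\sum_{k=0}^{n-1}\|u(t_k)-u_h^k\|^2_{_{L^2(\Omega,L^2(D))}}\Delta t,
\end{equation*}
valid for $n=1,\dots,N$, where the leading $\Delta t$ term on the right comes from the $\Delta t^{1/2}$ contribution in Lemma \ref{lem64} and the $(\Delta t^{1/2}+h^2)^2$ contribution in Lemma \ref{lem65} (whose $h^4$ piece is absorbed into $(\Delta t+h^2)^2\ln^2(\Delta t^{-1})$).

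With this inequality in hand, the discrete Gronwall lemma applied to the sequence $E_n:=\|u(t_n)-u_h^n\|^2_{_{L^2(\Omega,L^2(D))}}$ (noting $E_0=0$ since $u_h^0=\mathcal{P}_h u_0$ and $\theta_1$ also covers the projection error at $t=0$) gives
\begin{equation*}
E_n \le c\bigl[\Delta t + (\Delta t+h^2)^2\ln^2(\Delta t^{-1})\bigr] e^{cT},\qquad n=1,\dots,N.
\end{equation*}
Taking square roots and using $\sqrt{a^2+b^2}\le a+b$ for nonnegative $a,b$ yields the claimed estimate
\begin{equation*}
\|u(t_n)-u_h^n\|_{_{L^2(\Omega,L^2(D))}} \le c\bigl(\Delta t^{\frac{1}{2}} + (\Delta t+h^2)\ln(\Delta t^{-1})\bigr).
\end{equation*}
The only step requiring care is the squaring and Cauchy-Schwarz manipulation that reconciles the different powers on the past errors coming from the deterministic (drift) and stochastic (diffusion) contributions; once both are expressed in terms of $\sum E_k \Delta t$, the conclusion is a routine application of discrete Gronwall, and the constant $c$ depends only on $T$, the Lipschitz constants of $f$ and $G$, and $\|u_0\|_{_{L^2(\Omega,\mathcal{D}(L))}}$, but not on $\Delta t$ or $h$.
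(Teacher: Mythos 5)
Your proof is correct and follows essentially the same route as the paper: combine the decomposition \eqref{errdec} with Lemmas \ref{lem63}--\ref{lem65} to get a squared-error recursion $\|u(t_n)-u_h^n\|^2\le c\big[(\Delta t^{1/2}+(\Delta t+h^2)\ln(\Delta t^{-1}))^2+\sum_{k}\|u(t_k)-u_h^k\|^2\Delta t\big]$ and close with discrete Gronwall; you merely make explicit the Cauchy--Schwarz step that reconciles the linear-in-past-errors bound of Lemma \ref{lem64} with the quadratic one of Lemma \ref{lem65}, which the paper leaves implicit. The only slip is the parenthetical claim that $E_0=0$: since $u_h^0=\mathcal{P}_h u_0\neq u_0$ in general one only has $E_0=\|(I-\mathcal{P}_h)u_0\|^2_{L^2(\Omega,L^2(D))}=O(h^4)$, but this is absorbed into the right-hand side and does not affect the conclusion.
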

\begin{proof}
It follows from \eqref{errdec}, Lemmas \ref{lem63}-\ref{lem65}, and the triangle inequality: 
\bex
\| u(t_n)-u^n_{h}\|^2_{_{L^2(\Omega,L^2(D))}}\leq c\Big[\big(\Delta t^{\frac{1}{2}}+(\Delta t+h^2)\ln(\Delta t^{-1})\big)^2+\sum_{k=0}^{n-1}\| u(t_k)-u^k_{h}\|^2_{_{L^2(\Omega,L^2(D))}}\Delta t\Big],\ n=1,\dots,N.
\eex
Then the discrete Gronwall inequality yields
\bex
\| u(t_n)-u^n_{h}\|^2_{_{L^2(\Omega,L^2(D))}}\leq c\big(\Delta t^{\frac{1}{2}}+(\Delta t+h^2)\ln(\Delta t^{-1})\big)^2,\ \ n=1,\cdots, N.
\eex
This ends the proof.               
\end{proof}
\begin{remark}
Notice that the term $\Delta t^{\frac{1}{2}}$ dominates the term $\Delta t \ln(\Delta t^{-1})$, the estimate given in Theorem \ref{the61} 
can be simplified by
\bex
\| u(t_n)-u^n_{h}\|_{_{L^2(\Omega,L^2(D))}}\leq c\big(\Delta t^{\frac{1}{2}}+ h^2\ln(\Delta t^{-1})\big).
\eex
Also notice that $\Delta t^{-\varepsilon_0}$ dominates $\ln(\Delta t^{-1})$ 
for arbitrarily small $\varepsilon_0>0$, we have
\bex
\| u(t_n)-u^n_{h}\|_{_{L^2(\Omega,L^2(D))}}\leq c(\Delta t^{\frac{1}{2}}+ h^2\Delta t^{-\varepsilon_0})
\eex
or, since $\Delta t=O(h^2)$, 
\bex
\| u(t_n)-u^n_{h}\|_{_{L^2(\Omega,L^2(D))}}\leq c(\Delta t^{\frac{1}{2}}+ h^{2-\varepsilon_0}).
\eex
\end{remark}
\section{Numerical results}\label{numericalresult}

Several numerical experiments are presented in this section to validate our theoretical estimates and show the effect of stochastic factors on numerical solutions. We start by testing the convergence orders of time and space. 
\begin{example}[Accuracy Test]\label{example1}
	{\rm We take the stochastic Allen-Cahn (AC) equation with random diffusion coefficient field and multiplicative force noise as a numerical example to test the temporal and spatial convergence orders. The underlying equation is expressed as:
		\begin{equation}
			\begin{aligned}\label{ACeq}
				du(x,t)&=\varepsilon\partial_x\big(e^{z(x,\omega)}\partial_xu\big)dt+(u-u^3)dt+G(u)dW(x,t),\ 0<{t}<T,\ x\in D,\\
				u(x,t)&=0,\quad 0\leq t\leq T,\ x\in\partial{D}, \\
				u(x,0)&=u_0(x),\ x\in \bar{D},
			\end{aligned}
		\end{equation}
	where $z(x,\omega)$ is a W-M Gaussian random field with mean-zero and covariance function $c_q(x)$, and $W(x,t)$ is a $H^{\gamma}_0$-valued Wiener process 
 defined by
\begin{align}
W(t,x)=\sum\limits_{j=1}^\infty \sqrt{q_j} \sin(j\pi x)\beta_j(t),
	\end{align}
where $q_j=\mathcal{O}(j^{-(2\gamma+1+\epsilon)})$ with arbitrary small positive $\epsilon$.
	
	 The strong convergence rate in space and time is measured in terms of mean-square approximation errors at the endpoint $T=0.1$, 
	 caused by the spatial and temporal discretizations. The expected value of error is approximated by computing the mean of 100 samples. Note that the exact solution of the problem \eqref{ACeq} is unknown, and we will use the reference solution computed in the fine space-time mesh size as an approximation to the exact solution. If we denote by $u^{\rm{ref}}_j$ the reference solution of the $j$-th sample of the exact solution $u(T)$, and denote by $u_{j,h}^N$ the value of the $j$-th sample of the fully discrete numerical solution $u_h^N$. Then the mean-square error $\|u(T)-u^N_h\|_{_{L^2(\Omega,L^2(D))}}$ is approximately calculated by
	\bex
	\|u(T)-u^N_h\|_{_{L^2(\Omega,L^2(D))}}\approx\Big(\frac{1}{100}\sum_{j=1}^{100}\|u^{\rm{ref}}_j-u^N_{j,h}\|_{_{L^2(D)}}^2\Big)^\frac{1}{2}=:u_{\rm{error}}.
	\eex
	
	 We first test the time accuracy with different nonlinear terms $G$. Take the numerical solution computed by spatial mesh $h=1/128$ and time step size $\Delta t=10^{-6}$ as the reference solution for every sample. The approximation error $u_{\rm{error}}$ under different time step size is calculated by taking $u_0(x)=\sin(2\pi x)$,  $\varepsilon=10^{-3}$, $\gamma=1$ and $q=2$. Table \ref{time1} and Table \ref{time2} respectively show the results for the cases where $G(u)=(1-u^2)/2$ and $G(u)=u/2$, from which we observe that this is as predicted by the theory. 
		
		Next, we test the spatial accuracy. Now take the numerical solution computed by  $h=1/512$ and $\Delta t= 10^{-6}$ as the reference solution for every sample. We compute the approximation error $u_{\rm{error}}$ under different spatial mesh size by taking $u_0(x)=\sin(2\pi x)$, $\varepsilon=10^{-3}$, $\gamma=1$ and $q=2$ again.
	    Table \ref{space1} and Table \ref{space2} separately shows the relevant error and spatial convergence order for $G(u)=(1-u^2)/2$ and $G(u)=u/2$, which is also consistent with the theoretical result. 
}
\end{example} 
\begin{table}[htbp]
	\begin{minipage}{0.49\linewidth}
		\centering
		\caption{Time accuracy test}
		\label{time1}
		\begin{tabular}{l|ll}
			\hline
			$\Delta t$ & $u_{\rm{error}}$ & Order \\
			\hline
			1.00E-2 & 3.75E-3 & --  \\
			\hline
			5.00E-3 & 2.66E-3 & 0.49\\
			\hline
			2.50E-3 & 1.81E-3 & 0.55 \\
			\hline
			1.25E-3 & 1.34E-3 & 0.43\\
			\hline
			6.25E-4 & 9.32E-4 & 0.52 \\
			\hline
		\end{tabular}
	\end{minipage}
	\begin{minipage}{0.49\linewidth}
		\centering
		\caption{Time accuracy test}
		\label{time2}
		\begin{tabular}{l|ll}
			\hline
			$\Dt$ & $u_{\rm{error}}$ & Order \\
			\hline
			1.00E-2 & 5.23E-3 & --  \\
			\hline
			5.00E-3 & 3.90E-3 & 0.42\\
			\hline
		    2.50E-3 & 2.67E-3 & 0.55 \\
			\hline
			1.25E-3 & 1.83E-3 & 0.55\\
			\hline
			6.25E-4 & 1.33E-3 & 0.46\\
			\hline
		\end{tabular}
	\end{minipage}
\end{table}

\begin{table}[htbp]
	\begin{minipage}{0.49\linewidth}
		\centering
		\caption{Spatial accuracy test}
		\label{space1}
		\begin{tabular}{l|ll}
			\hline
			$h$ & $u_{\rm{error}}$ & Order \\
			\hline
			1/16 & 1.28E-3 & --  \\
			\hline
			1/32 & 3.77E-3 & 1.76\\
			\hline
			1/64 & 1.17E-3 & 1.69 \\
			\hline
			1/128 & 3.61E-4 & 1.70\\
			\hline
			1/256 & 9.74E-5 & 1.89 \\
			\hline
		\end{tabular}
	\end{minipage}
	\begin{minipage}{0.49\linewidth}
		\centering
		\caption{Spatial accuracy test}
		\label{space2}
		\begin{tabular}{l|ll}
			\hline
			$h$ & $u_{\rm{error}}$ & Order \\
			\hline
			1/16 & 1.36E-2 & --  \\
			\hline
			1/32 & 4.09E-3 & 1.73\\
			\hline
			1/64 & 1.33E-3 & 1.62 \\
			\hline
			1/128 & 4.20E-4 & 1.67\\
			\hline
			1/256 & 1.11E-4 & 1.91\\
			\hline
		\end{tabular}
	\end{minipage}
\end{table}


\begin{example}[Phenomenon comparison]\label{example2}
	{\rm In this example, the time evolution of the numerical solution of the stochastic AC equation shown in \eqref{ACeq} is compared to that of the deterministic AC equation to show the effect of random perturbations, where the deterministic version is expressed by: 
		\begin{align*}
			{u}_t(x,t)&=\varepsilon\partial_{xx}{u}+{u}-{u}^3,\  0<t<T,\ x\in D,\\
			u(x,t)&=0,\quad 0\leq t\leq T,\ x\in\partial{D}, \\
			u(x,0)&=u_0(x),\ x\in \bar{D}.
		\end{align*}
	   We first show the effect of random field $a(x,w)=\varepsilon e^{z(x,w)}$ on the numerical solution in the absence of the nonlinear term $G$ (i.e., $G(u)=0$), where $z(x,\omega)$ is a mean-zero Gaussian random field with covariance function $c_q(x)$. Given a sample point, by taking $u_0=\sin(4 \pi x)$, $T=0.1$, $\Delta t=10^{-5}$, $h=1/128$ and $\varepsilon=10^{-2}$, we plot in Figure \ref{Fig:rand-evolution} the contour maps of the numerical solution under different cases, where figure (a) represents the deterministic case and figures (b) and (c) denote the random case with $q=0.1$ and $q=2$, respectively. Compared to the deterministic model, it is seen from figures (b) and (c) that the random diffusion coefficient makes the diffusion process  uncertain. Moreover, it's known that the larger the parameter $q$, the more regular  the random field $z(x,\omega)$ \cite{LORD2014}, which results in the diffusion process shown in figure (c) is more uniform than that in figure (b).
	
	Then we give a demonstration of the case with both random diffusion coefficients as well as multiplicative force noise. Given a sample point, by taking $u_0=\sin(4 \pi x)$, $T=4$, $\Delta t=10^{-4}$, $h=1/128$, $q=2$, $\varepsilon=10^{-5}$ and $G(u)=\frac{1}{2}(1-u^2)$, we plot the time evolution of the numerical solution in Figure \ref{Fig:randnoise-evolution}, where figure (a) denotes the deterministic model and figures (b) and (c) represent the case with  $\gamma=0.5$ and $\gamma=1$, respectively. Compared to figure (a), it can be seen from figures (b) and (c) that there are small-scale structures resulted from noise, which are not present in the deterministic model. Noise plays a significant role, it changes the properties of the solutions. Notably, the static kink corresponding to the deterministic model varies greatly after the incorporation of noise and random diffusion coefficient fields. The kinks can interact, even annihilate each other, and some new kinks may arise. One more thing to point out that the larger the regularity parameter $\gamma$, the smoother the noise and the smaller the kink variation, which seems to be observed between figures (b) and (c).
	}
\end{example} 
\begin{figure}[htbp]
	\centering
	\subfigure[]{
		\includegraphics[width=0.31\linewidth]{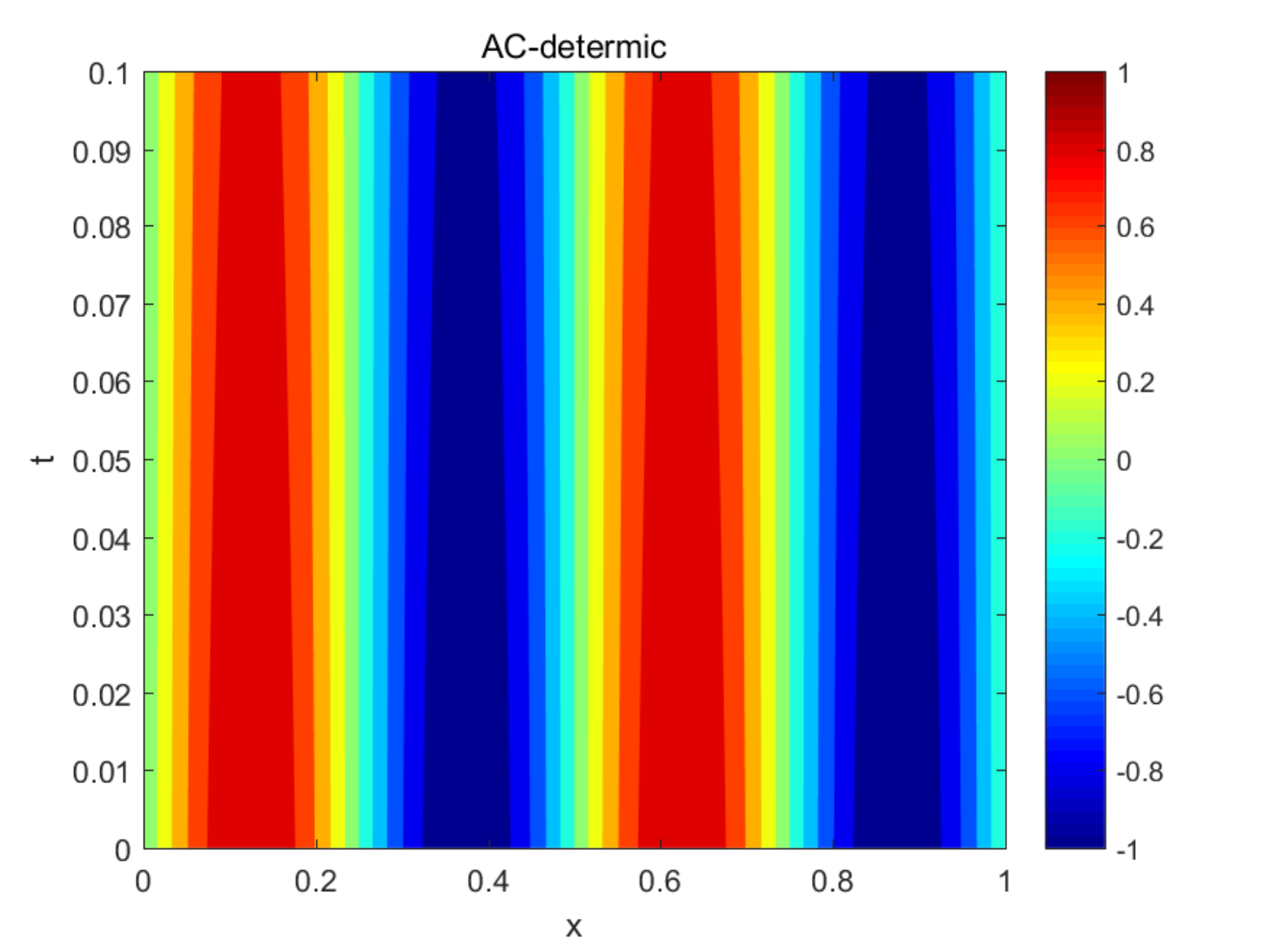}
	}
	\subfigure[]{
			\includegraphics[width=0.31\linewidth]{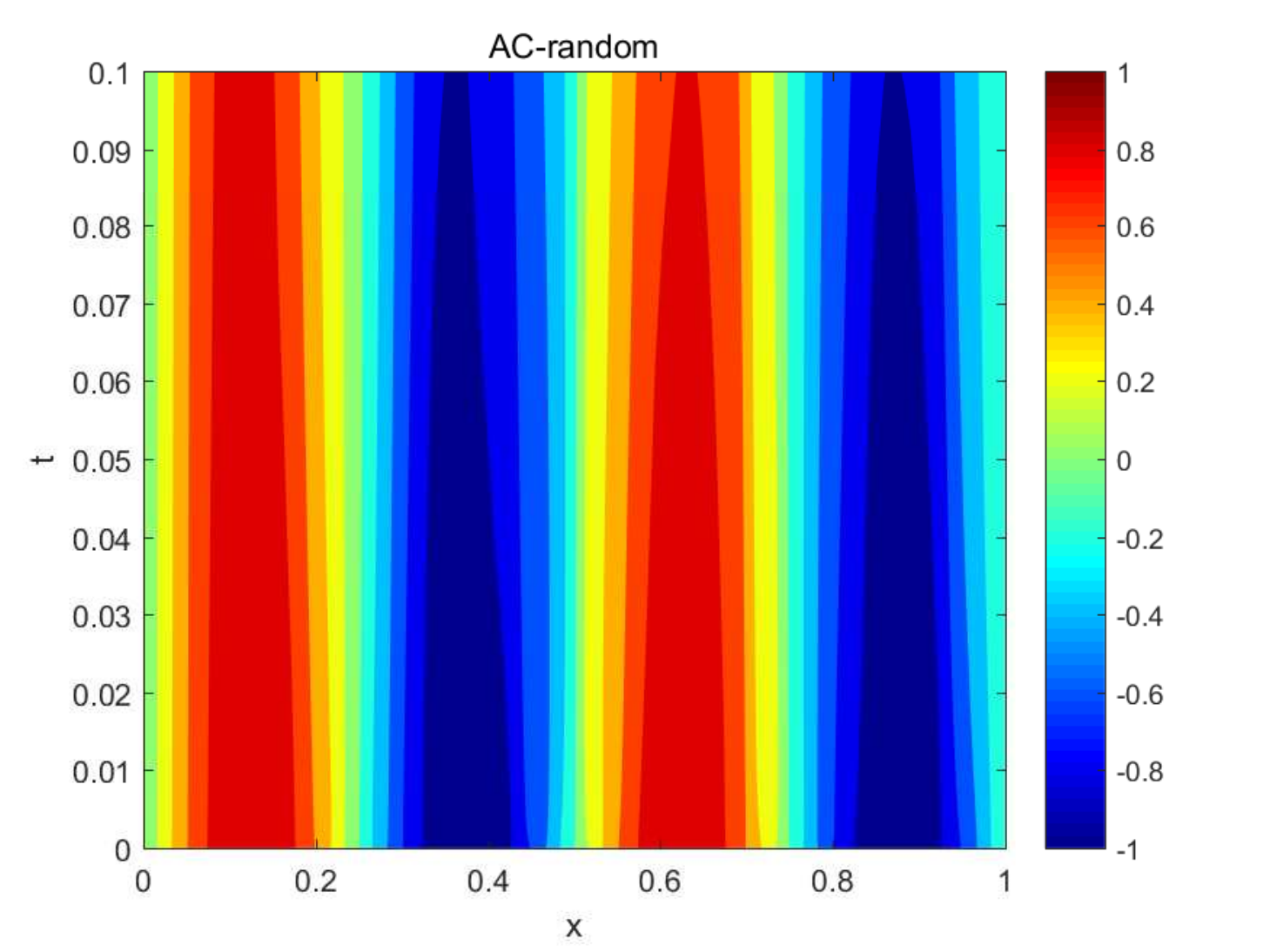}
		} 
	\subfigure[]{
		\includegraphics[width=0.31\linewidth]{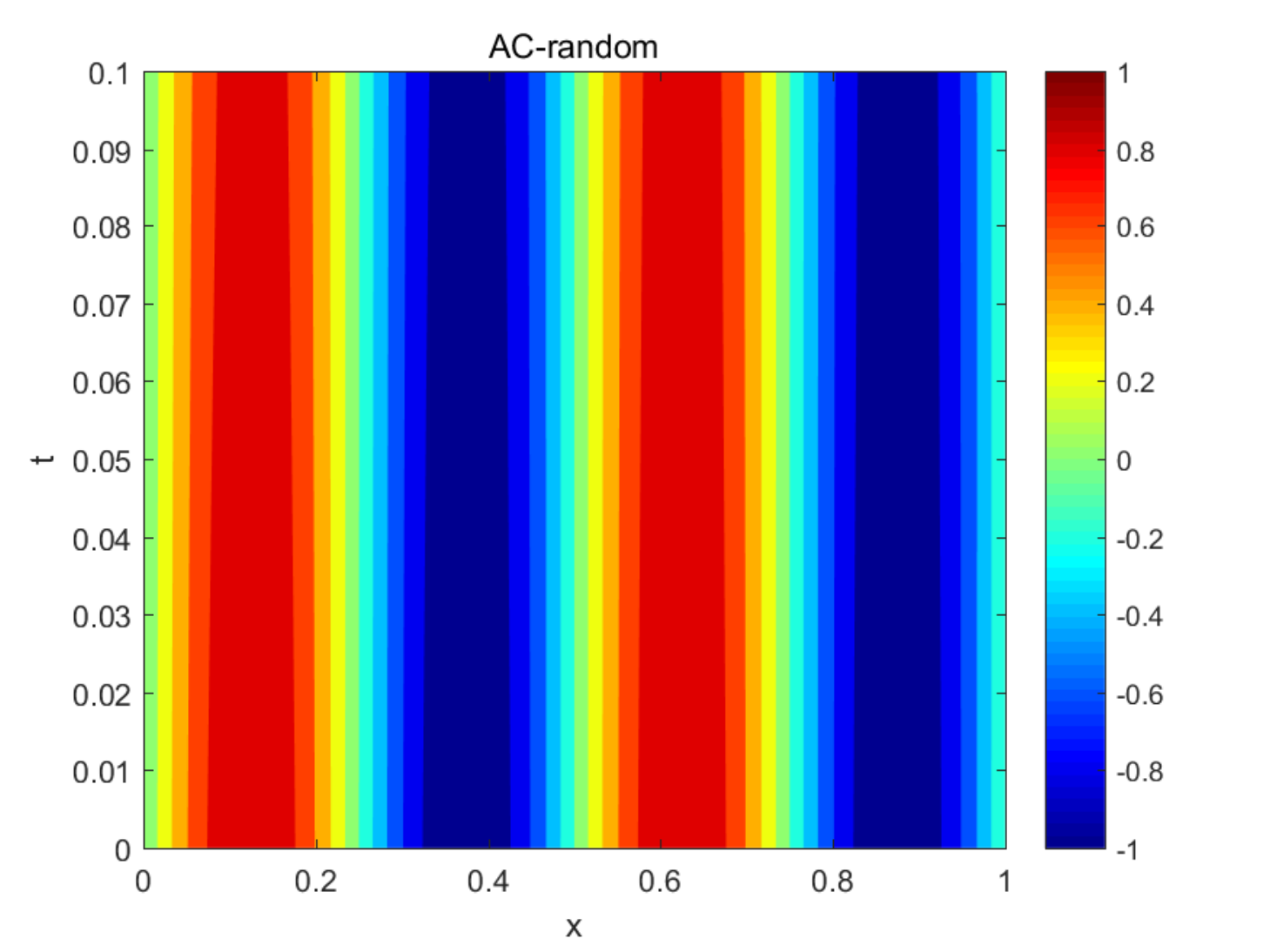}
	}
\caption{Time evolution of numerical solution with different diffusion coefficients. (a): deterministic case, (b): random case with $q=0.1$, (c): random case with $q=2$.}
\label{Fig:rand-evolution}
\end{figure}
\begin{figure}[htbp]
	\centering
	\subfigure[]{
		\includegraphics[width=0.31\linewidth]{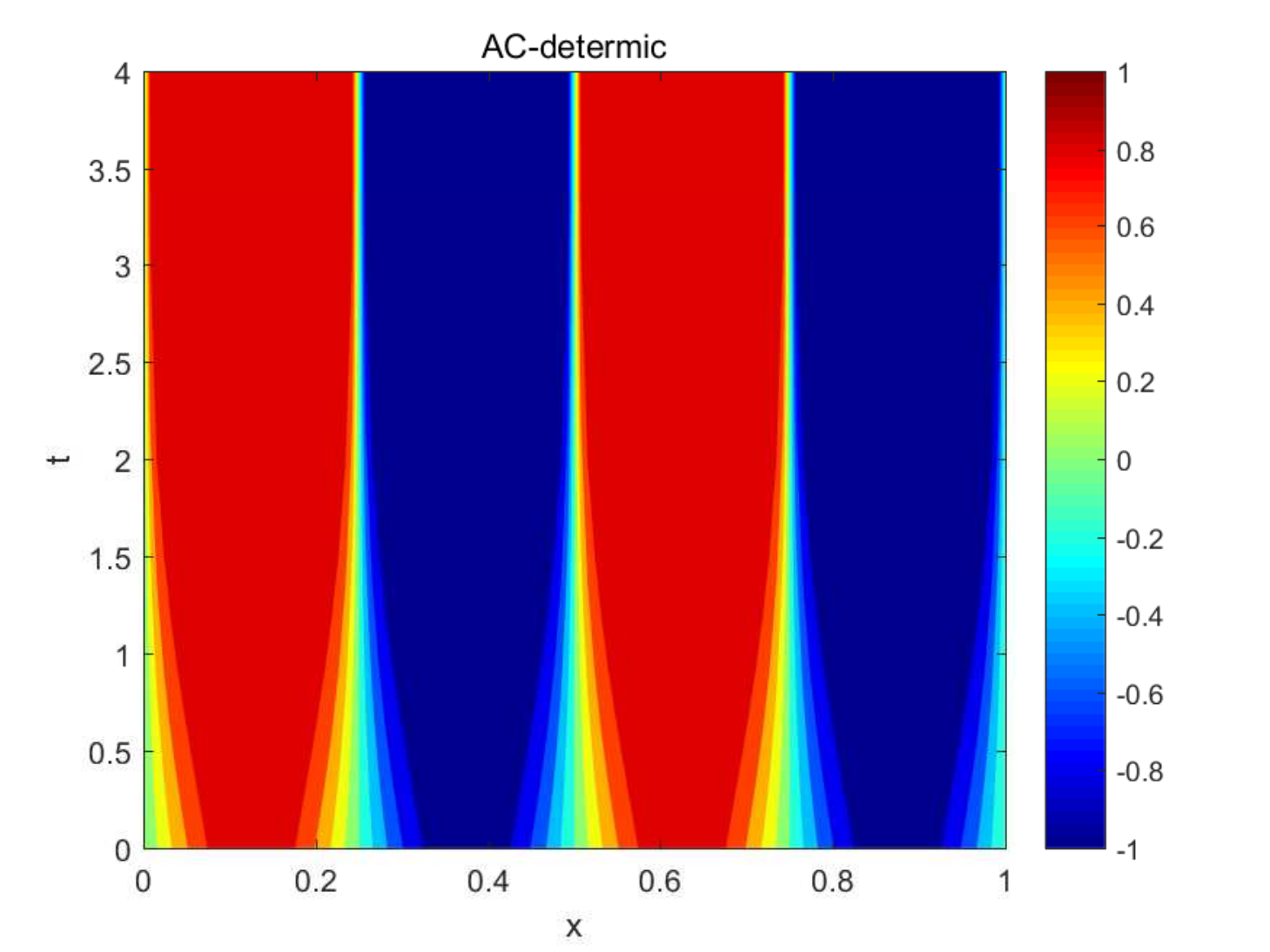}
	}
	\subfigure[]{
		\includegraphics[width=0.31\linewidth]{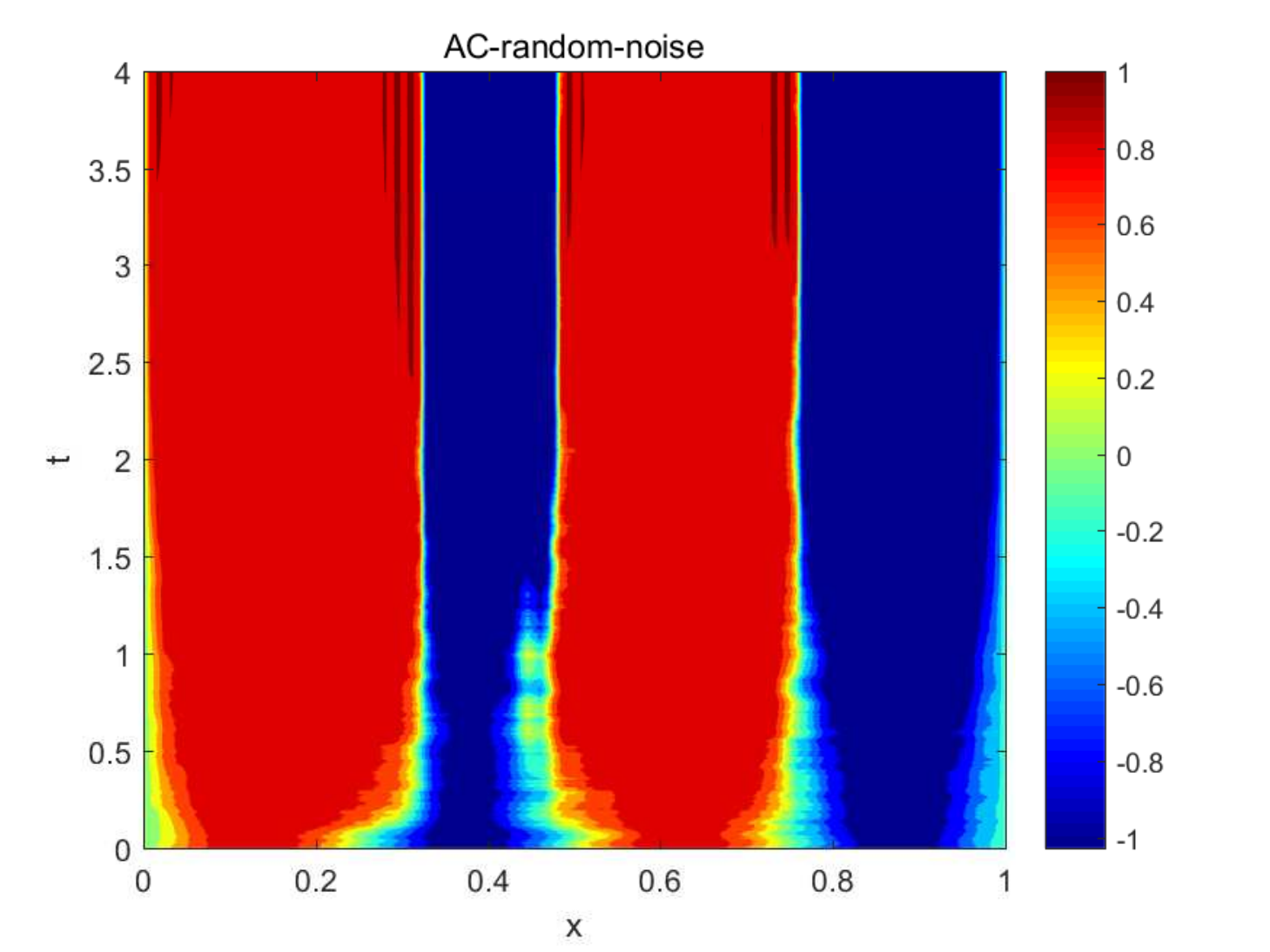}
	} 
	\subfigure[]{
		\includegraphics[width=0.31\linewidth]{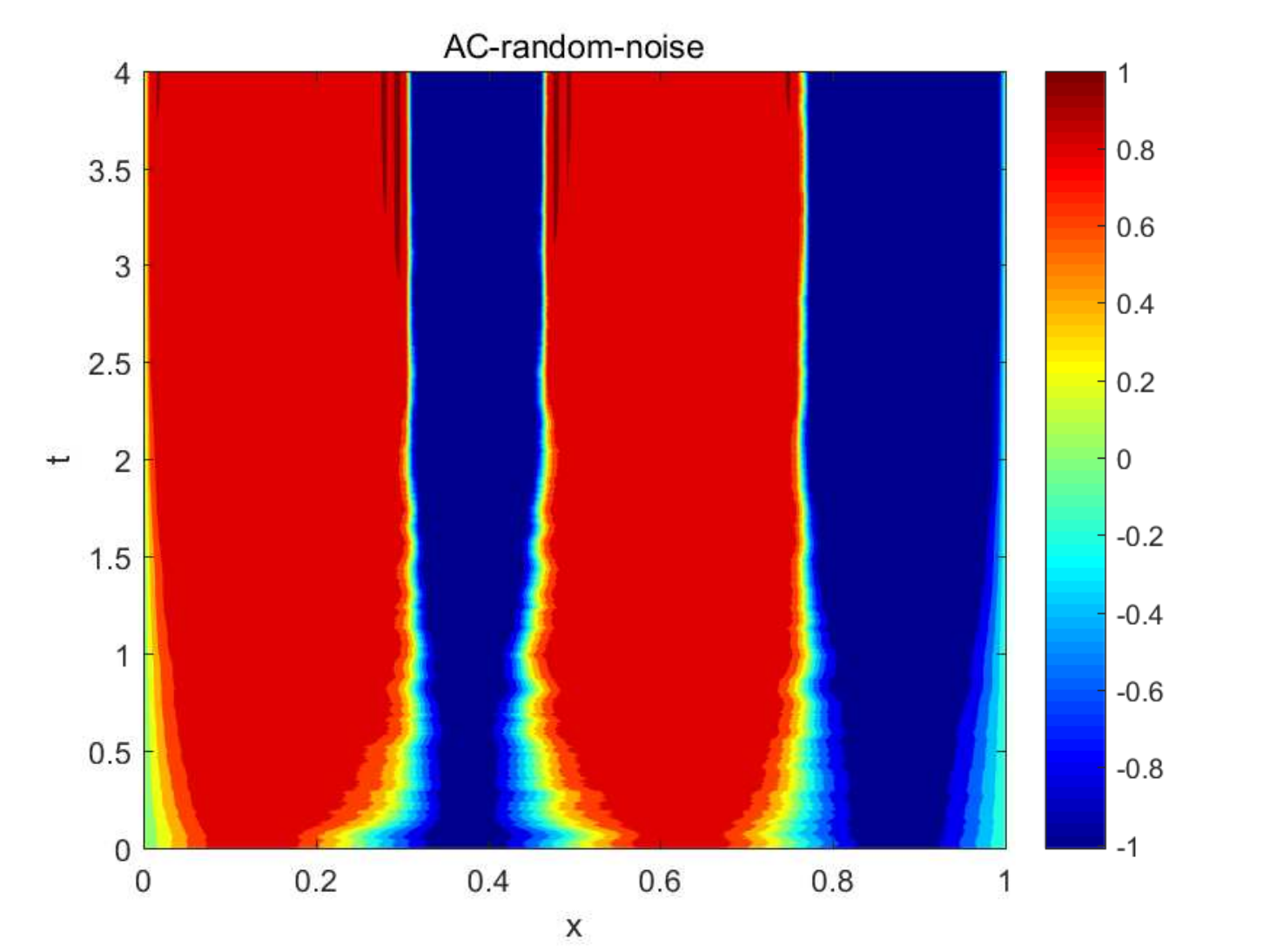}
	}
	\caption{Time evolution of numerical solution with different noise. (a): deterministic case, (b): random case with $\gamma=0.5$, (c): random case with $\gamma=1$.}
\label{Fig:randnoise-evolution}
\end{figure}

\bibliographystyle{plain}
\bibliography{ref}

\end{document}